\documentclass[a4paper,11pt]{article}
\usepackage[top=3.0cm, bottom=3.0cm, inner=3.0cm, outer=3.0cm, includefoot]{geometry}

\usepackage[utf8]{inputenc}
\usepackage[T1]{fontenc}
\usepackage{authblk}
\usepackage{verbatim}
\usepackage{multirow}
\usepackage{multicol}
\usepackage{hyperref}
\usepackage{amssymb}
\usepackage{amsmath}
\usepackage{mathtools,bbm}
\usepackage{graphicx,tikz}
\usepackage{amsthm}
\usepackage{caption,subcaption}
\usepackage{float}
\usepackage{color}
\usepackage{enumerate}

\usepackage{cancel}

\setlength{\parindent}{0mm}
\setlength{\parskip}{2mm }

\newcommand{\eChar}{\begin{enumerate}[(i)]}
\newcommand{\eCharR}{\begin{enumerate}[(a)]}
\newcommand{\eBr}{\begin{enumerate}[(1)]}

\newcommand{\Abstract}

\theoremstyle{plain}

\newtheorem{lemma}{Lemma}[section]
\newtheorem{theorem}[lemma]{Theorem}

\newtheorem{corollary}[lemma]{Corollary}
\theoremstyle{definition}

\newtheorem{definition}[lemma]{Definition}
\newtheorem{remark}[lemma]{Remark}

\newtheorem{problem}[lemma]{Problem}

\numberwithin{equation}{section}

\title
{
Diameter bounds for distance-regular graphs via long-scale Ollivier Ricci curvature

}

\author[1]{Kaizhe Chen\thanks{Email: ckz22000259@mail.ustc.edu.cn}}
\author[2]{Shiping Liu\thanks{Email: spliu@ustc.edu.cn}}
\affil[1]{School of Gifted Young, University of Science and Technology of China, Hefei}
\affil[2]{School of Mathematical Sciences, University of Science and Technology of China, Hefei}

\date{ }

\begin{document}

\maketitle
\thispagestyle{plain}
\begin{abstract}
In this paper, we derive new sharp diameter bounds for distance regular graphs, which better answer a problem raised by Neumaier and Penji\' c in many cases. Our proof is built upon a relation between the diameter and long-scale Ollivier Ricci curvature of a graph, which can be considered as an improvement of the discrete Bonnet-Myers theorem. Our method further leads to significant improvements of existing diameter bounds for amply regular graphs and $(s,c,a,k)$-graphs.
\end{abstract}

\section{Introduction}
Distance-regular graphs play an important role in algebraic combinatorics due to their close and deep relation to design theory, coding theory, finite and Euclidean geometry, and group theory \cite{BCN89, DKT}. Bounding the diameter of a distance-regular graph in terms of its intersection numbers is a very important problem which has attracted lots of attention \cite{BDKM,BHK,HLX24,I83,Mulder79,NP22JCTB,NP22,Smith74,T82,T83}. In \cite[Problem 1.1]{NP22}, Neumaier and Penji\' c raised a question asking for diameter bounds in terms of a small initial part of the intersection array. 

\begin{problem}[{\cite{NP22}}]\label{prob:NP}
    Let $G$ denote a distance-regular graph, and assume that we only know the first $ q + 2$ elements $b_i$ and $c_i$ of intersection array
    \begin{align}\label{intersection array}
        \{ b_0,b_1,...,b_q,b_{q+1},...; c_1,c_2,...,c_{q+1},c_{q+2},...\},
    \end{align}
i.e., assume that we don’t know intersection numbers $b_{q+2},..., b_{d-1}$ and $c_{q+3}, . . . , c_{d}$. Use the numbers given in \eqref{intersection array} to give an upper bound for the diameter of $G$.
\end{problem}



For a distance-regular graph $G$ of diameter $d$ and valency $k$, we denote its intersection array by $\{b_0,b_1,\ldots, b_{d-1}; c_1,c_2,\ldots, c_d\}$ (see Section \ref{Distance-regular graph} for definitions). We further denote $a_i:=k-b_i-c_i$, $0\le i\le d$, where we use the notation $c_0=b_d=0$. 
In \cite{NP22}, Neumaier and Penji\'c give the following upper bound for the diameter of $G$.

\begin{theorem}[\cite{NP22}]\label{NP}
    Let $G$ denote a distance-regular graph of diameter $d$, valency $k\ge 3$ and let $q$ be an integer with $2 \le q \le d-1$. If $c_{q+1}> c_q$ and $a_q \le c_{q+1}-c_q$ then
    \begin{align}\label{eqNP}
        d\le \left(\left\lfloor\frac{k-c_{q+1}-1}{c_q}\right\rfloor +2\right)q+1.
    \end{align}
\end{theorem}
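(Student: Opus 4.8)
The plan is to fix vertices $x,y$ at distance $d$, choose a geodesic $x=x_0,x_1,\dots,x_d=y$, and show that the numbers $c_i$ must increase so quickly along this geodesic that $d$ cannot be large. I would first record the standard facts about a distance-regular graph that I will use: $1=c_1\le c_2\le\dots\le c_d$, $k=b_0\ge b_1\ge\dots\ge b_{d-1}\ge 1$, $a_i+b_i+c_i=k$, and the inequality $c_i\le b_j$ whenever $i+j\le d$; in particular $c_{d-1}\le b_1=k-1-a_1\le k-1$. I would also note the equivalent forms of the hypothesis $a_q\le c_{q+1}-c_q$: it is the same as $b_q\ge k-c_{q+1}$, and also the same as $c_q+a_q\le c_{q+1}$. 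The last form is the one I expect to use: read at vertices $v,w$ with $d(v,w)=q$, it says that the neighbours of $w$ that do not move strictly away from $v$ (the $c_q+a_q$ neighbours of $w$ at distance at most $q$ from $v$) number at most $c_{q+1}$.

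The core of the argument is a superadditivity estimate for the $c$-sequence in steps of size $q$, in its cleanest form $c_{i+q}\ge c_i+c_q$ for all $i$ with $q\le i\le d-q$. I would prove it by a local double count. Pick $u,w$ with $d(u,w)=i+q$ and a vertex $v$ on a geodesic from $u$ to $w$ with $d(u,v)=i$ and $d(v,w)=q$. The $c_q$ neighbours of $w$ at distance $q-1$ from $v$ lie, by the triangle inequality applied from $u$ and from $w$, at distance exactly $i+q-1$ from $u$, so they are $c_q$ of the $c_{i+q}$ neighbours of $w$ at distance $i+q-1$ from $u$. To produce $c_i$ further such neighbours I would bring in the $c_i$ neighbours $p_1,\dots,p_{c_i}$ of $v$ at distance $i-1$ from $u$: each $p_s$ satisfies $d(p_s,w)=q+1$, and the $c_{q+1}$ neighbours of $w$ at distance $q$ from $p_s$ again all lie at distance exactly $i+q-1$ from $u$; one then has to combine these $c_i$ sets of size $c_{q+1}$ with the first batch and show the union has size at least $c_i+c_q$, and it is precisely here that $c_q+a_q\le c_{q+1}$ must be used to control the overlaps. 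Iterating, starting from the value $c_{q+1}$ (where the strict inequality $c_{q+1}>c_q$ lets the first block contribute the full $c_{q+1}$ rather than only $c_q$), gives $c_{q+1+jq}\ge c_{q+1}+jc_q$ for every $j\ge 0$ with $q+1+jq\le d$.

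Finally I would compare with the boundary estimate. Let $j$ be the largest integer with $q+1+jq\le d-1$ and write $d-1=q+1+jq+t$ with $0\le t\le q-1$. Then $c_{d-1}\ge c_{q+1+jq}\ge c_{q+1}+jc_q$, while $c_{d-1}\le k-1$, so $jc_q\le k-c_{q+1}-1$ and hence $j\le\lfloor(k-c_{q+1}-1)/c_q\rfloor$; therefore $d=(q+1+jq+t)+1\le(j+2)q+1\le\big(\lfloor(k-c_{q+1}-1)/c_q\rfloor+2\big)q+1$, which is the claim; in the few degenerate cases in which no such $j$ exists one has $d=q+1$ and the bound is immediate. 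The main obstacle is the superadditivity lemma, and within it the passage from the $c_q$ ``free'' inward neighbours to the full $c_i+c_q$: I expect that controlling the overlaps of the $c_i$ sets of size $c_{q+1}$ --- using $a_q\le c_{q+1}-c_q$ and not merely the trivial $a_q\le c_{q+1}$ --- is the delicate point, together with the bookkeeping at the two ends of the geodesic that produces the exact constants $-c_{q+1}-1$ and $+2$ appearing in the bound.
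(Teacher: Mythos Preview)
This theorem is not proved in the paper under review: it is quoted from \cite{NP22} as prior work, serving only as the benchmark that Theorems~\ref{diameter} and~\ref{diameter2} are meant to improve. There is therefore no in-paper proof to compare your attempt against.

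On its own merits, your plan has the right architecture. An iterated growth estimate $c_{q+1+jq}\ge c_{q+1}+jc_q$, capped by $c_i\le k-1$ for $i<d$ (equivalently your $c_{d-1}\le b_1\le k-1$), does yield exactly the stated bound, and your endgame arithmetic is correct. The gap is precisely where you flag it. From your setup one checks easily that $C:=C_q(v,w)\subseteq D_s:=C_{q+1}(p_s,w)$ for every $s$, and that each $D_s\subseteq C_{i+q}(u,w)$; but this alone gives only $c_{i+q}\ge |{\textstyle\bigcup_s}D_s|\ge c_{q+1}$, not $c_{i+q}\ge c_i+c_q$. Nothing you have written prevents the sets $D_s\setminus C$ (each of size $c_{q+1}-c_q$) from overlapping heavily --- in the extreme, all coinciding --- and then $\bigl|\bigcup_s D_s\bigr|=c_{q+1}$, which for $i\ge q+1$ is strictly smaller than $c_i+c_q$. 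So the hypothesis $a_q\le c_{q+1}-c_q$ has to be used not merely to bound each $|D_s\setminus C|$ but to force the different $D_s$ to be genuinely spread out, and you have not indicated how. Until that overlap control is written out (or replaced by an equivalent monotonicity for the $b$-sequence, where the bookkeeping is sometimes cleaner), the superadditivity lemma --- and with it the whole proof --- is a plan rather than an argument.
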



Note that Theorem \ref{NP} does not use all the numbers in \eqref{intersection array}. Moreover, the inequality \eqref{eqNP} is possible to take equality only when $d-1$ is a multiple of $q$. In this paper, we derive the following diameter bound for distance-regular graphs using all the numbers in \eqref{intersection array}, which is possible to take equality even if $d-1$ is not a multiple of $q$.

\begin{theorem}\label{diameter}
    Let $G$ be a distance-regular graph of diameter $d$ and valency $k$. Let $q$ be an integer with $1\le q\le d-1$ such that $a_{q-1}=0$, $c_{q+1}>c_q$ and $c_{q+1}\ge a_{q}$. Then, for any $0\le p\le d$, we have
    \begin{align}\label{eq:diameter}
       d\le \max_{0\le r\le q-1} \left\{\left\lfloor\frac{b_p-c_p+b_r-c_r}{2c_q+M}\right\rfloor q +p+r\right\}, 
    \end{align}
    where $$M=\left\lceil \frac{a_q(c_{q+1}-a_{q})}{c_{q+1}-c_q} \right\rceil.$$
\end{theorem}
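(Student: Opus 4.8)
I would prove Theorem~\ref{diameter} by combining two ingredients, in the spirit described in the introduction: (A) an estimate for the long-scale Ollivier Ricci curvature of a distance-regular graph at scale $q$ in terms of the intersection numbers, and (B) a long-scale version of the discrete Bonnet--Myers argument. Throughout, for a vertex $x$ write $\mu_x$ for the uniform probability measure on the set of neighbours of $x$, and $W_1(\cdot,\cdot)$ for the $L^1$-Wasserstein distance induced by the graph metric; I will use repeatedly the identity $2c_i+a_i=k-(b_i-c_i)$, valid for every $i$.

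For (B), fix a geodesic $v_0,v_1,\dots,v_d$ with $d(v_0,v_d)=d$ and fix $p\in\{0,\dots,d\}$. Set $L=\lfloor (d-p)/q\rfloor$ and $r=(d-p)-Lq\in\{0,\dots,q-1\}$, so that $v_{d-r}=v_{p+Lq}$ and $p+Lq+r=d$. Applying the triangle inequality for $W_1$ along the chain $v_p,\,v_{p+q},\,v_{p+2q},\dots,v_{d-r}$ and inserting on each of the $L$ links the scale-$q$ estimate $W_1(\mu_x,\mu_y)\le q-\tfrac{2c_q+M}{k}$ for pairs $x,y$ at distance $q$ (ingredient (A), which is the same for all such pairs by distance-regularity) gives $W_1(\mu_{v_p},\mu_{v_{d-r}})\le Lq-\tfrac{L(2c_q+M)}{k}$. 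Against this I would play a lower bound $W_1(\mu_{v_p},\mu_{v_{d-r}})\ge (d-p-r)-\tfrac{(b_p-c_p)+(b_r-c_r)}{k}$, obtained by testing the transport (Kantorovich--Rubinstein duality) against a carefully chosen $1$-Lipschitz function built from the distances to the two endpoints $v_0$ and $v_d$, using the monotonicity of $i\mapsto b_i-c_i$ together with $a_{q-1}=0$; this is the step where the full geodesic, not merely the segment from $v_p$ to $v_{d-r}$, is needed. Since $d-p-r=Lq$, comparing the two bounds yields $L(2c_q+M)\le (b_p-c_p)+(b_r-c_r)$, and because $L\in\IN$ this forces $L\le\bigl\lfloor\frac{(b_p-c_p)+(b_r-c_r)}{2c_q+M}\bigr\rfloor$; hence $d=Lq+p+r$ is bounded by the term with this value of $r$ on the right-hand side of \eqref{eq:diameter}, and a fortiori by the maximum over $0\le r\le q-1$.

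The heart of the proof is ingredient (A): for vertices $x,y$ with $d(x,y)=q$, build an admissible transport plan from $\mu_x$ to $\mu_y$ of cost at most $q-\tfrac{2c_q+M}{k}$. Partition $N(x)$ by distance to $y$ into the $c_q$ ``inward'' neighbours (at distance $q-1$), the $a_q$ ``sideways'' neighbours (distance $q$) and the $b_q$ ``outward'' neighbours (distance $q+1$), and partition $N(y)$ symmetrically. Pushing the inward mass of $x$ onto the inward neighbours of $y$ (cost $q-2$), the sideways mass onto the sideways neighbours of $y$ (cost $q-1$), and the outward mass onto the outward neighbours of $y$ (cost $q$) already gives cost $q-\tfrac{2c_q+a_q}{k}$. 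The extra hypotheses serve to reroute a further portion of the sideways mass more cheaply, down to cost $q-2$: a sideways neighbour of $x$ sits at distance $q$ from $y$, and the assumptions $c_{q+1}>c_q$ (the spheres around $y$ genuinely expand past level $q$) and $c_{q+1}\ge a_q$ (feasibility) let one divert $M-a_q$ units (measured against $\tfrac1k$) of this mass onto inward neighbours of $y$; a direct counting computation identifies the diverted amount with $M=\bigl\lceil a_q(c_{q+1}-a_q)/(c_{q+1}-c_q)\bigr\rceil$, the ceiling appearing precisely because whole vertices are being rerouted. The hypothesis $a_{q-1}=0$ enters here to control the local sphere geometry around $y$ -- a vertex at distance $q-1$ from $y$ has no neighbour at distance $q-1$ from $y$ -- which is what guarantees that the diverted mass is absorbed by the inward shell of $N(y)$ and that the pieces of the plan fit together.

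I expect the main obstacle to be verifying that these transport plans are genuinely admissible, i.e.\ that the prescribed moves define a coupling with the correct marginals: the routing requires matchings between distance-layers of $N(x)$ and $N(y)$ realising the minimal displacements, and the existence of such matchings, for every pair $x,y$ at distance $q$, is a structural fact about distance-regular graphs that is not immediate from the intersection array alone; establishing it -- or rearranging the plan so that only layer cardinalities and the stated hypotheses are used -- is where the real work lies, together with pinning down the diverted amount $M$ with its ceiling. A secondary difficulty is making the $1$-Lipschitz test function in (B) produce exactly the boundary contributions $b_p-c_p$ and $b_r-c_r$ rather than some weaker expression. As sanity checks I would first treat the clean special cases $q=1$ and $a_q=0$ (where $M=0$ and \eqref{eq:diameter} simplifies), and I would verify the sharpness of \eqref{eq:diameter} on Hamming and Johnson graphs, where equality should hold for suitable $p$ even when $d-1$ is not a multiple of $q$.
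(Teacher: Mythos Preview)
Your overall two-ingredient strategy matches the paper's exactly, and step (B) is essentially correct --- but you have made it harder than necessary and misattributed which hypotheses it needs. The paper obtains your lower bound on $W_1(\mu_{v_p},\mu_{v_{d-r}})$ not by duality but by the triangle inequality applied the other way round: one writes $d=W_1(\delta_{v_0},\delta_{v_d})\le W_1(\delta_{v_0},\mu_{v_p})+W_1(\mu_{v_p},\mu_{v_{d-r}})+W_1(\mu_{v_{d-r}},\delta_{v_d})$ with Dirac masses at the endpoints, and bounds the two outer terms \emph{above} by the obvious transport (cost $p+\tfrac{b_p-c_p}{k}$ and $r+\tfrac{b_r-c_r}{k}$ respectively). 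No monotonicity of $i\mapsto b_i-c_i$ and no use of $a_{q-1}=0$ is required here; that hypothesis belongs entirely to ingredient (A).

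The genuine gap is in your construction for (A). Your proposed ``baseline plus rerouting'' scheme does not work as stated. First, a separate perfect matching $B_q(y,x)\to B_q(x,y)$ at distance $q$ need not exist, since a vertex of $B_q(y,x)$ may have its distance-$q$ partners in $S_1(y)$ lying in $A_q(x,y)$ rather than in $B_q(x,y)$. Second, and more seriously, the rerouting you describe --- sideways mass of $x$ sent to \emph{inward} neighbours of $y$ at cost $q-2$ --- is geometrically impossible: for $v\in A_q(y,x)$ and $u\in C_q(x,y)$ one has $d(v,u)\ge q-1$, and $a_{q-1}=0$ forbids $d(v,u)=q-1$ (else $v\in A_{q-1}(u,x)$), so in fact $d(v,u)=q$. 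The paper's device is different: it treats $A_q\cup B_q$ on both sides jointly and builds a bipartite \emph{multigraph} on $\bigl(A_q(y,x)\cup B_q(y,x)\bigr)\times\bigl(A_q(x,y)\cup B_q(x,y)\bigr)$ whose edges are all distance-$q$ pairs together with $c_{q+1}-a_q$ parallel copies of a fixed bijection $\varphi\colon A_q(y,x)\to A_q(x,y)$ realising distance $q-1$ (the existence of $\varphi$ is where $a_{q-1}=0$ is actually used). A short count shows this multigraph is $(c_{q+1}-c_q)$-regular, so K\"onig's theorem decomposes it into $c_{q+1}-c_q$ perfect matchings; by pigeonhole one of them contains at least $\lceil a_q(c_{q+1}-a_q)/(c_{q+1}-c_q)\rceil=M$ of the $\varphi$-edges, and that matching together with a $C_q\to C_q$ bijection at distance $q-2$ gives the transport of cost $q-\tfrac{2c_q+M}{k+1}$. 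This K\"onig-plus-pigeonhole step on the combined $A_q\cup B_q$ block, not a direct diversion to the inward shell, is what produces both the admissible coupling and the ceiling in $M$.
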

\begin{remark}
Our Theorem \ref{diameter} better answers Problem \ref{prob:NP} in many cases. We provide three examples below. 
\begin{itemize}
    \item[(i)] If we know that $\{22, 21, 20, 3,...; 1, 2, 3, 20,...\}$ are the first $8$ numbers of the intersection array of a distance-regular graph $G$, then $b_4\le k-c_4=2$. Applying Theorem \ref{diameter} with $q=3$ and $p=4$ shows that the diameter $d$ of $G$ is at most $6$, which is sharp for the coset graph of the shortened binary Golay code with intersection array $\{22, 21, 20, 3,2,1; 1, 2, 3, 20,21,22\}$. Note that Theorem \ref{NP} can only tell $d\le 7$.
    \item[(ii)] If we know that $\{5, 4, 1,...; 1,1,4,...\}$ are the first $6$ numbers of the intersection array of a distance-regular graph $G$, then $b_3\le k-c_3=1$. Applying Theorem \ref{diameter} with $q=2$ and $p=3$ shows that the diameter $d$ of $G$ is at most $4$, which is sharp for the Wells graph with intersection array $\{5, 4, 1,1; 1,1,4,5\}$. Note that Theorem \ref{NP} can only tell $d\le 5$.
    \item[(iii)] Let $\{21, 20, 16, 6,2,...; 1, 2, 6, 16,...\}$ be the first $9$ numbers of the intersection array of a distance-regular graph $G$. Applying Theorem \ref{diameter} with $q=2$ and $p=4$ shows that the diameter $d$ of $G$ is at most $6$, which is sharp for the coset graph of the once shortened and once truncated binary Golay code with intersection array $\{21, 20, 16, 6,2,1,0; 1, 2, 6, 16,20,21\}$. Note that Theorem \ref{NP} can only tell $d\le 7$.
\end{itemize}
\end{remark}

If we only know the values of $a_{q-1},a_q,c_q$ and $c_{q+1}$, we still get a diameter bound as follows.

\begin{theorem}\label{diameter2}
    Let $G$ be a distance-regular graph of diameter $d$ and valency $k$. Let $q$ be an integer with $1\le q\le d-1$ such that $a_{q-1}=0$, $c_{q+1}>c_q$ and $c_{q+1}\ge a_{q}$. Then, for any $0\le p\le d$, we have
    \begin{align}\label{eq:diameter2}
       d\le 2p-1+ \max \left\{0, \left(\left\lfloor\frac{2(b_p-c_p)}{2c_q+M}\right\rfloor+1\right)q \right\}, 
    \end{align}
    where $$M=\left\lceil \frac{a_q(c_{q+1}-a_{q})}{c_{q+1}-c_q} \right\rceil.$$
\end{theorem}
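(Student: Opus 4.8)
The plan is to re-run the long-scale Bonnet--Myers argument behind Theorem \ref{diameter}, but to cut a diametrical geodesic \emph{symmetrically} at distance $p$ from each of its two endpoints, so that both ``boundary terms'' are measured by $b_p-c_p$ rather than one of them by some $b_r-c_r$ with $r\le q-1$. First I would dispose of the trivial case $2p>d$: there $d\le 2p-1$, and \eqref{eq:diameter2} holds because its right-hand side is at least $2p-1$. So assume $2p\le d$. Then the standard distance-regular inequality $c_i\le b_j$ whenever $i+j\le d$ (applied with $i=j=p$) gives $b_p\ge c_p$, so $\left\lfloor 2(b_p-c_p)/(2c_q+M)\right\rfloor\ge 0$ and the maximum in \eqref{eq:diameter2} equals $\bigl(\left\lfloor 2(b_p-c_p)/(2c_q+M)\right\rfloor+1\bigr)q$. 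Writing $N:=2c_q+M$ and using $d-2p\le q\left\lfloor(d-2p)/q\right\rfloor+(q-1)$, the bound \eqref{eq:diameter2} then reduces to the single inequality
\begin{align}\label{eq:goal-diam2}
  \left\lfloor\frac{d-2p}{q}\right\rfloor\le\left\lfloor\frac{2(b_p-c_p)}{N}\right\rfloor.
\end{align}

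To establish \eqref{eq:goal-diam2} I would fix $x,y$ with $\dist(x,y)=d$, a geodesic $x=x_0,x_1,\dots,x_d=y$, and set $u:=x_p$, $v:=x_{d-p}$, so that $\dist(u,v)=d-2p$ while $\dist(x,u)=\dist(y,v)=p$. Every intermediate vertex $x_{p+j}$, $0\le j\le d-2p$, then lies at distance $\ge p$ from both $x$ and $y$, which is exactly the regime in which the long-scale Ollivier--Ricci curvature estimate underlying Theorem \ref{diameter} — the one assembled from $a_{q-1}=0$, $c_{q+1}>c_q$, $c_{q+1}\ge a_q$ and producing the quantity $N=2c_q+M$ — is available with ``boundary at distance $p$''. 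I would split the sub-geodesic $u=x_p,\dots,x_{d-p}=v$ into $\left\lfloor(d-2p)/q\right\rfloor$ consecutive complete $q$-steps followed by one (possibly empty) shorter step, and run the Bonnet--Myers telescoping with the Lipschitz potential $\dist(x,\cdot)$ on the half nearer $u$ and $\dist(y,\cdot)$ on the half nearer $v$. Each complete $q$-step should be forced to consume at least $N$ units of ``slack'', while the slack available at the two ends $u,v$ is at most $(b_p-c_p)+(b_p-c_p)=2(b_p-c_p)$; comparing these gives $\left\lfloor(d-2p)/q\right\rfloor N\le 2(b_p-c_p)$, and dividing by $N$ and using that the left side is an integer yields \eqref{eq:goal-diam2}.

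The hard part will be the curvature bookkeeping in the previous paragraph: one must check that, after collapsing the parameters $p$ and $r$ of Theorem \ref{diameter} onto a single $p$ and cutting symmetrically, the excess that can be absorbed near $v$ is again exactly $b_p-c_p$ (not a larger quantity tied to some other distance index), and that patching $\dist(x,\cdot)$ and $\dist(y,\cdot)$ across the middle of the sub-geodesic incurs no additive loss, so that the total budget is precisely $2(b_p-c_p)$. This should be a direct, if slightly delicate, adaptation of the technical heart of Theorem \ref{diameter}; by contrast, the treatment of the partial final $q$-step (responsible for the harmless $+(q-1)$) and the floor manipulations in the reduction above are routine.
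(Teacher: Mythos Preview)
Your outline is correct and follows the same Bonnet--Myers route as the paper: the boundary estimate at each end is Theorem~\ref{WassersteinJump}, the $q$-step estimate is Theorem~\ref{WassersteinDRG}, and the telescoping is exactly Theorem~\ref{Wasserstein} with $\varepsilon=\tfrac{1}{k+1}$, $C_1=(2c_q+M)/(k+1)$ and $C_2=(b_p-c_p)/(k+1)$; the paper's proof of Theorem~\ref{diameter2} is literally this one-line citation. The complications you worry about --- patching two Lipschitz potentials across the middle, handling a partial final $q$-step --- are artifacts of your dual framing and do not arise in the paper's argument: in the transport formulation one simply writes $d=W_1(\mu_x^1,\mu_y^1)$ and applies the triangle inequality for $W_1$ (no $1$-Lipschitz function enters, so there is nothing to patch), and the remainder step disappears because the paper works not at distance $d$ but at distance exactly $2p+lq\le d$ (equivalently, assumes the bound fails and chooses $l=\lfloor 2C_2/C_1\rfloor+1$).
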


We prove Theorems \ref{diameter} and \ref{diameter2} via establishing a relation between the diameter and long-scale Ollivier Ricci curvature \cite{CK19, O09} of a graph (see Theorem \ref{Wasserstein}), which can be considered as an improvement of the discrete Bonnet-Myers theorem via Ollivier/Lin-Lu-Yau curvature \cite{LLY11,O09}. 
Then our proofs are built upon estimating the long-scale Ollivier Ricci curvature of distance-regular graphs (see Theorems \ref{WassersteinJump} and \ref{WassersteinDRG}).

Our method further leads to the following diameter bounds for amply regular graphs and $(s,c,a,k)$-graphs (see Definitions \ref{ARGdefinition} and \ref{scakdefinition}), which significantly improve the corresponding previous results. The $(s,c,a,k)$-graphs are introduced by Terwilliger \cite{T83} as a generalization of distance-regular graphs. In case $s=2$, an $(s,c,a,k)$-graph is amply regular. The following Corollaries \ref{ARG} and \ref{scak} can be considered as extensions of Theorem \ref{diameter2}.

\begin{corollary}\label{ARG}
Let $G$ be a connected amply regular graph of diameter $d\ge 4$ with parameters $(v,k,\lambda,\mu)$, where $1\ne \mu\ge\lambda$. Then
    \begin{align}\label{eq:ARGdiameter}
    d\le  \left\lfloor\frac{2(k-2\mu)}{2+\left\lceil \frac{\lambda(\mu-\lambda)}{\mu-1} \right\rceil}\right\rfloor+4. 
    \end{align}
\end{corollary}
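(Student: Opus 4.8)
The plan is to re-run the proof of Theorem~\ref{diameter2} with the choices $q=1$ and $p=2$, verifying that every combinatorial ingredient survives the passage from distance-regular to amply regular graphs. An amply regular graph does carry well-defined initial intersection numbers $c_1=1$, $c_2=\mu$, $a_0=0$, $a_1=\lambda$, $b_0=k$, and with $q=1$ the hypotheses of Theorem~\ref{diameter2} read $a_{q-1}=a_0=0$ (automatic), $c_{q+1}=\mu>1=c_1=c_q$ (using $\mu\neq1$ and $\mu\ge1$, the latter because $d\ge2$ and $G$ is connected) and $c_{q+1}=\mu\ge\lambda=a_1=a_q$; all follow from the standing hypothesis $1\neq\mu\ge\lambda$. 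In this regime $M=\left\lceil\frac{\lambda(\mu-\lambda)}{\mu-1}\right\rceil\ge0$ and $2c_q+M=2+M$, which is precisely the (positive) denominator in \eqref{eq:ARGdiameter}.

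First I would fix a geodesic $x_0x_1\cdots x_d$ realising the diameter and establish, for amply regular graphs at scale $q=1$ along this geodesic, the analogue of the long-scale Ollivier Ricci curvature estimates of Theorems~\ref{WassersteinJump} and \ref{WassersteinDRG}. The key point is that a scale-$1$ transport plan moves mass only one edge at a time, so it consults only: the exact local counts $|\Gamma(u)\cap\Gamma(w)|=\lambda$ for an edge $\{u,w\}$ and $|\Gamma(u)\cap\Gamma(w)|=\mu$ for a pair at distance $2$; and, at larger distances, only facts that survive along any geodesic of a connected amply regular graph, such as $a_i\ge0$, $b_i\le k$, $a_i+b_i+c_i=k$, and the inequalities $|\Gamma(x_i)\cap\Gamma_{i\pm1}(x_0)|\ge\mu$ (obtained from the $\mu$ common neighbours of $x_i$ and $x_{i\pm2}$, which all lie at distance $i\pm1$ from $x_0$). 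The hypothesis $d\ge4$ enters precisely so that $x_4$ (and symmetrically $x_{d-4}$) is available: the $\mu$ common neighbours of $x_2$ and $x_4$ all lie in $\Gamma_3(x_0)$ and are neighbours of $x_2$, so $x_2$ has between $\mu$ and $k-\mu$ neighbours in $\Gamma_3(x_0)$ and exactly $\mu$ in $\Gamma_1(x_0)$; hence the quantity playing the role of $b_p-c_p$ at $p=2$ lies in $[0,\,k-2\mu]$. Feeding these inputs into the improved Bonnet--Myers relation of Theorem~\ref{Wasserstein} exactly as in the proof of Theorem~\ref{diameter2} gives
\[
d\ \le\ 2p-1+\left(\left\lfloor\frac{2(b_p-c_p)}{2c_q+M}\right\rfloor+1\right)q
\ \le\ 3+\left\lfloor\frac{2(k-2\mu)}{2+M}\right\rfloor+1
\ =\ \left\lfloor\frac{2(k-2\mu)}{2+\left\lceil\frac{\lambda(\mu-\lambda)}{\mu-1}\right\rceil}\right\rfloor+4,
\]
where monotonicity of $\lfloor\,\cdot\,\rfloor$ in its numerator is used in the middle step and $b_p-c_p\ge0$ ensures the maximum in \eqref{eq:diameter2} sits on the displayed branch.

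The step I expect to be the main obstacle is this re-derivation: proving the amply regular analogue of the curvature estimate of Theorem~\ref{WassersteinDRG}. The distance-regular proof freely uses the exact constancy of $c_i$ and $b_i$ at \emph{all} distances, whereas here only $c_1=1$, $c_2=\mu$ and $a_1=\lambda$ are exact and one has merely the one-sided inequalities above at distance $\ge3$; one must check that at scale $q=1$ each appeal to a higher intersection number can be replaced by the corresponding inequality with no loss, so that the Wasserstein estimate retains the same per-edge gain $2c_q+M=2+M$ as in the distance-regular case. I expect this to go through precisely because a scale-$1$ plan never carries mass more than one edge, so the only structural quantities it feels are $\lambda$, $\mu$ and the crude bounds on $a_i,b_i,c_i$ above. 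Granting it, Corollary~\ref{ARG} is exactly the arithmetic displayed above, and an entirely analogous argument yields Corollary~\ref{scak}.
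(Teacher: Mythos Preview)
Your proposal is correct and follows essentially the same route as the paper: apply Theorem~\ref{WassersteinJump} at $p=2$ (where $|B_2(x,y)|\le k-\mu$ simply because $|C_2(x,y)|=\mu$), observe that the $q=1$ case of the proof of Theorem~\ref{WassersteinDRG} uses only $a_1,c_1,c_2$ and hence carries over verbatim to amply regular graphs, and then invoke Theorem~\ref{Wasserstein}. One minor simplification: the upper bound $|B_2(x_0,x_2)|\le k-\mu$ does not require $d\ge4$ or the existence of $x_4$, since it is immediate from $|C_2|=\mu$ and $|A_2|\ge0$; the hypothesis $d\ge4$ is only needed so that the $\max$ in \eqref{eq:diameter2} lands on the non-zero branch (equivalently, so that the conclusion $d\le3$ from the zero branch is already excluded).
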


\begin{remark}
    Let $G$ be a connected amply regular graph of diameter $d\ge 4$ with parameters $(v,k,\lambda,\mu)$, where $1\ne \mu>\lambda$. Brouwer, Cohen and Neumaier \cite[Corollary 1.9.2]{BCN89} prove that 
\begin{equation}\label{eq:BCN}
    d\leq  k-2\mu+4.
\end{equation}
Huang, Liu and Xia \cite{HLX24} prove that \begin{equation}\label{eq:HLX24}
    d\leq \left\lfloor\frac{2k}{3}\right\rfloor.
\end{equation}
Note that Corollary \ref{ARG} significantly improves both \eqref{eq:BCN} and \eqref{eq:HLX24}.
\end{remark}

\begin{corollary}\label{scak}
Let $G$ be an $(s,c,a,k)$-graph with $a\le c$ and diameter $d$. Then
    \begin{align}\label{eq:scakdiameter}
    d\le \max \left\{2s, (s-1)\left( \left\lfloor\frac{2(\delta-2c)}{2+M}\right\rfloor +3 \right)+2 \right\}, 
    \end{align}
    where $\delta$ is the minimum valency of $G$ and $M=\left\lceil \dfrac{a(c-a)}{c-1} \right\rceil$.
\end{corollary}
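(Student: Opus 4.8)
\emph{Proof idea.} The plan is to run the same two-step scheme as for Theorems~\ref{diameter} and~\ref{diameter2}: bound the long-scale Ollivier Ricci curvature of $G$ from below using only local combinatorial information, and then convert this lower bound into a diameter bound by the improved Bonnet--Myers inequality of Theorem~\ref{Wasserstein}. What changes is that the curvature estimate will be fed the local intersection data of an $(s,c,a,k)$-graph rather than a genuine intersection array.

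First I would extract from Definition~\ref{scakdefinition} the structure the argument needs. In an $(s,c,a,k)$-graph every vertex has valency at least $\delta$; for a pair of vertices at distance $i$ the local ``backward'' intersection number equals $1$ when $i\le s-1$ and is at least $c$ when $i=s$, and the local ``sideways'' number is at most $a$ when $i\le s-1$. Hence, as far as balls of radius at most $s$ are concerned, $G$ carries precisely the data that the transport argument behind Theorem~\ref{WassersteinJump} (sharpened in Theorem~\ref{WassersteinDRG}) uses for a distance-regular graph with $a_{s-2}=0$, $c_{s-1}=1<c_s=c$ and $a_{s-1}\le a$; thus I take $q=s-1$, and the hypothesis $a\le c$ is the $(s,c,a,k)$-analogue of $c_{q+1}\ge a_q$. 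I would then re-run that transport argument almost verbatim, with two bookkeeping modifications: the valency is only bounded below, by $\delta$, instead of being a constant $k$; and, lacking global regularity, I expect to lose one extra unit of slack when the two ``end caps'' of the transport plan (the radius-$\approx s$ neighbourhoods of the two ends of a geodesic) are matched up. This produces a positive lower bound for the long-scale curvature of $G$ between any two vertices whose distance is large enough to accommodate a full ``jump window'' of width $s-1$ together with the two caps.

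Finally I would combine this curvature estimate with Theorem~\ref{Wasserstein}. Let $x,y$ be vertices with $\dist(x,y)=d=\diam(G)$, and fix a geodesic joining them. If $d$ is below the threshold at which the curvature estimate becomes available, then $d\le 2s$, which is the first branch of the maximum in~\eqref{eq:scakdiameter}; this branch also absorbs the degenerate case $\delta<2c$, in which the floor in~\eqref{eq:scakdiameter} is negative. Otherwise the geodesic splits into $2$ overlap units together with a number $N$ of blocks of width $s-1$ (two of which are the end caps), each interior block contracting the relevant Wasserstein distance by $2c_q+M=2+M$ --- here $c_q=c_{s-1}=1$, and the rounding term $M=\lceil a(c-a)/(c-1)\rceil$ arises exactly as in Theorem~\ref{WassersteinJump}, by averaging the sideways contributions (bounded by $a$) against the jump of size $c-1$. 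Feeding the resulting curvature lower bound into Theorem~\ref{Wasserstein} forces $N\le\lfloor 2(\delta-2c)/(2+M)\rfloor+3$, and $d\le 2+(s-1)N$ then gives the second branch of~\eqref{eq:scakdiameter}. I expect the middle step to be the main obstacle: carrying the transport argument of Theorem~\ref{WassersteinJump} through without the constant sphere sizes and constant intersection numbers of a distance-regular graph --- in particular checking that the unique-backward-neighbour property at distances $\le s-1$ suffices to build the sweeping transport plan, and that the irregularity really costs only the one unit reflected in the constants ``$+3$'' and ``$+2$''. The case $s=2$ should recover Corollary~\ref{ARG} up to that one unit, which is a convenient consistency check.
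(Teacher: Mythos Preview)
Your outline has the right scaffolding --- taking $q=s-1$, reading off $c_{s-1}=1$, $c_s=c$, $a_{s-1}=a$, $a_{s-2}=0$ from the girth condition, applying Theorem~\ref{WassersteinJump} at the two ends with $p=s$, and chaining Theorem~\ref{WassersteinDRG}-type bounds across a geodesic --- is exactly what the paper does. But the passage ``the valency is only bounded below, by $\delta$ \ldots I expect to lose one extra unit of slack'' is where you diverge from the paper and where a real difficulty hides. The measures $\mu_{x_i}^\varepsilon$ put mass $(1-\varepsilon)/k_{x_i}$ on each neighbour, so to telescope $W_1$ along the geodesic with a \emph{single} idleness $\varepsilon=1/(\delta+1)$ you need $k_{x_i}=\delta$ for every intermediate point, not merely $k_{x_i}\ge\delta$; moreover, the transport construction behind Theorem~\ref{WassersteinDRG} matches $S_1(x_{i-1})$ with $S_1(x_i)$ via a regular bipartite graph, and this requires $|S_1(x_{i-1})|=|S_1(x_i})|$, which a lower bound on valency does not give.

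The paper closes this gap not by absorbing a loss but by invoking Terwilliger's structural lemma (Lemma~\ref{scak83}): an $(s,c,a,k)$-graph is either regular, or bipartite with constant valency on each side, and in the latter case $s$ must be odd. In the regular case Theorem~\ref{Wasserstein} applies directly. In the bipartite case one starts the geodesic at a vertex $x$ on the $k$-side; then $x_0$, at distance $s$ (odd) from $x$, lies on the $\delta$-side, and since $s-1$ is even every subsequent $x_i$ stays on the $\delta$-side. Thus all $x_i$ have valency exactly $\delta$, the measures $\mu_{x_i}^{1/(\delta+1)}$ are uniform, and both Wasserstein estimates go through verbatim. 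This is the missing idea in your sketch; without it the ``one unit of slack'' heuristic does not close, because the mismatch is multiplicative (in the normalising $k_{x_i}+1$) rather than additive.
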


\begin{remark}
    Let $G$ be an $(s,c,a,k)$-graph with $a\le c$, $c>2$ and diameter $d$. Terwilliger \cite[Theorem 2]{T83} prove that 
\begin{equation}\label{eq:T83}
    d\le \max \left\{3s-1, (s-1)\left( \frac{2ck-2c}{3c-2} -2c+5 \right)+2 \right\}.
\end{equation}
    Note that we use the minimum valency $\delta$ instead of the maximum valency $k$ in \eqref{eq:scakdiameter}. In addition, the coefficient of $\delta$ in \eqref{eq:scakdiameter} is at most $\frac{2}{3}$ (when $M\ne 0$) and the coefficient of $k$ in \eqref{eq:T83} is $\frac{2c}{3c-2}>\frac{2}{3}$, which implies that \eqref{eq:scakdiameter} improves \eqref{eq:T83} for large $k$.
\end{remark}

The rest of the paper is organized as follows.
In Section \ref{Preliminaries}, we recall definitions and properties about distance-regular graphs and perfect matchings. In Section \ref{Wasserstein distance}, we recall the concept of Wasserstein distance and establish a relation between the diameter and the long-scale Ollivier Ricci curvature of a graph. In Section 4, we will prove Theorems \ref{diameter} and \ref{diameter2}. In Section 5, we will prove Corollaries \ref{ARG} and \ref{scak}.

\section{Preliminaries}\label{Preliminaries}
\subsection{Distance-regular graph}\label{Distance-regular graph}
Let $G=(V,E)$ be a graph with vertex set $V$ and edge set $E$. For any $x\in V$, let $k_x$ be the valency of $x$. For any two vertices $x$ and $y$ in $V$, we denote by ${\rm d}(x,y)$ the distance between them. We write $x\sim y$ if $x$ and $y$ are adjacent.

Let $G=(V,E)$ be a graph with diameter $d$. For a vertex $x\in V$ and any non-negative integer $h\le d$, let $S_h(x)$ denote the subset of vertices in $V$ that are at distance $h$ from $x$. We use the notations that $S_{-1}(x) = S_{d+1}(x) := \emptyset$. For any two vertices $x$ and $y$ in $V$ at distance $h$, let
$$A_h(x,y):=S_{h}(x)\cap S_{1}(y),\ B_h(x,y):=S_{h+1}(x)\cap S_{1}(y),\ C_h(x,y):=S_{h-1}(x)\cap S_{1}(y).$$

We say $G=(V,E)$ is \textit{regular with valency $k$} if each vertex in $G$ has exactly $k$ neighbors. A graph $G$ is called \textit{distance-regular} if there are integers $b_i$, $c_i$, $0 \le i \le d$ which satisfy $b_i = |B_i(x, y)|$ and $c_i = |C_i(x, y)|$ for any two vertices $x$ and $y$ in $V$ at distance $i$. Clearly such a graph is regular of valency $k := b_0, b_d = c_0 = 0, c_1 = 1$ and $$a_i:= |A_i(x, y)| = k-b_i-c_i,\ 0 \le i \le d.$$

The array $\{b_0,b_1,...,b_{d-1}; c_1,c_2,...,c_d\}$ is called the \textit{intersection array} of $G$. The following properties of intersection arrays are well-known.
\begin{lemma}[{\cite[Proposition 4.1.6]{BCN89}}]
    Let $G$ be a distance-regular graph of diameter $d\ge2$, valency $k$ and intersection numbers $c_i, a_i, b_i, 0 \le i \le d$. The following hold.
    \begin{itemize}
    \item [\rm{(i)}] $k=b_0>b_1\ge b_2\ge\dots\ge b_d = 0$,
    \item [\rm{(ii)}] $c_0< 1=c_1\le c_2\le\dots \le c_d\le k$.
    \end{itemize}
\end{lemma}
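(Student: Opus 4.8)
Every assertion is immediate from the definitions except the two monotonicity chains $b_1\ge b_2\ge\cdots$ and $c_1\le c_2\le\cdots$, and for those I would use a short ``squeezing'' argument along a geodesic. So the plan is: first to read off the boundary values and the single strict inequality $b_0>b_1$ directly from the definitions; and then, for each relevant index $i$, to prove a pair of set inclusions between neighbourhood sets of a fixed vertex $y$ obtained by shifting the base point one step along a geodesic through $y$, and take cardinalities.

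\textbf{Boundary values.} Directly from the definitions: $b_0 = |B_0(x,x)| = |S_1(x)\cap S_1(x)| = k$, and $b_d = |B_d(x,y)| = |S_{d+1}(x)\cap S_1(y)| = 0$ since $S_{d+1}(x)=\emptyset$; moreover, for adjacent $x,y$ we have $x\in S_1(y)\setminus S_2(x)$, so $B_1(x,y) = S_2(x)\cap S_1(y)\subseteq S_1(y)\setminus\{x\}$ and hence $b_1\le k-1<k=b_0$. On the $c$-side, $c_0 = |S_{-1}(x)\cap S_1(x)| = 0 < 1$; for adjacent $x,y$ we get $C_1(x,y) = S_0(x)\cap S_1(y) = \{x\}$, so $c_1 = 1$; and $C_i(x,y)\subseteq S_1(y)$ forces $c_i\le k$ for every $i$, in particular $c_d\le k$.

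\textbf{Monotonicity by squeezing.} Fix $i$ with $1\le i\le d-1$. Since $G$ has diameter $d$, a shortest path attaining the diameter passes through a vertex at every intermediate distance, so there exist $x,y$ with ${\rm d}(x,y)=i+1$; I would fix such a pair, a geodesic $x=x_0,x_1,\dots,x_{i+1}=y$, and set $x':=x_1$, so that ${\rm d}(x,x')=1$ and ${\rm d}(x',y)=i$. The key step is the pair of inclusions
\[
C_i(x',y)\subseteq C_{i+1}(x,y),\qquad B_{i+1}(x,y)\subseteq B_i(x',y).
\]
Both hold because, for any neighbour $w$ of $y$, the adjacency $x\sim x'$ gives $|{\rm d}(x,w)-{\rm d}(x',w)|\le 1$, while $w\sim y$ gives ${\rm d}(x,w)\ge {\rm d}(x,y)-1=i$ and ${\rm d}(x',w)\le {\rm d}(x',y)+1=i+1$. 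Hence if $w\sim y$ and ${\rm d}(x',w)=i-1$, then ${\rm d}(x,w)$ is squeezed between $i$ and $(i-1)+1=i$, so $w\in C_{i+1}(x,y)$; and if $w\sim y$ and ${\rm d}(x,w)=i+2$, then ${\rm d}(x',w)$ is squeezed between $(i+2)-1=i+1$ and $i+1$, so $w\in B_i(x',y)$. Taking cardinalities, $c_i=|C_i(x',y)|\le|C_{i+1}(x,y)|=c_{i+1}$ and $b_{i+1}=|B_{i+1}(x,y)|\le|B_i(x',y)|=b_i$. Together with the boundary values this is exactly (i) and (ii).

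\textbf{Expected difficulty.} I do not anticipate any real obstacle: the whole argument reduces to two triangle-inequality squeezes once the configuration $(x_0,x_1,y)$ is in place. The only points needing a word of care are that pairs at every distance $\le d$ genuinely exist (so the configuration can be arranged for each $i$), and the bookkeeping of which of the sets $A,B,C$ at which index is nested inside which after the one-step shift of the base point.
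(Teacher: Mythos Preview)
The paper does not prove this lemma; it is quoted from \cite[Proposition~4.1.6]{BCN89} without argument. Your proof is correct and is essentially the standard one found in that reference: the boundary values are read off directly, and the monotonicity follows from the two inclusions $C_i(x',y)\subseteq C_{i+1}(x,y)$ and $B_{i+1}(x,y)\subseteq B_i(x',y)$ obtained by sliding the base point one step along a geodesic and applying the triangle inequality.
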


For more information about distance-regular graphs,  we refer the reader to \cite{BCN89}.

\subsection{Perfect matching}
Let us recall the definition of a \emph{perfect matching}.
\begin{definition}
    Let $G$ be a graph. A set $\mathcal M$ of pairwise non-incident edges is called a \emph{matching}. 
    Each vertex adjacent to an edge of $\mathcal M$ is said to be \emph{covered} by $\mathcal M$. 
    A matching $\mathcal M$ is called a \emph{perfect matching} if it covers every vertex of the graph.
\end{definition}

The following K\"onig's theorem \cite{K16} is a key tool in estimating the (long-scale) Ollivier Ricci curvature.

\begin{theorem}[König's theorem]\label{konig}A bipartite graph $G$ can be decomposed into $k$ edge-disjoint perfect matchings if and only if $G$ is $k$-regular.\end{theorem}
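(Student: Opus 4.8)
The plan is to prove the two implications separately, the forward one being immediate and the reverse one being an induction on $k$ powered by Hall's marriage theorem.

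First I would dispose of the ``only if'' direction. Suppose $E(G)$ is partitioned into $k$ edge-disjoint perfect matchings $\mathcal{M}_1,\dots,\mathcal{M}_k$. Each $\mathcal{M}_j$ covers every vertex, so every vertex is incident to exactly one edge of $\mathcal{M}_j$; summing over $j$ shows each vertex has degree exactly $k$, i.e.\ $G$ is $k$-regular. (For this direction one only uses that the $\mathcal{M}_j$ are matchings covering all vertices and that they partition $E(G)$; bipartiteness is not needed.)

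For the ``if'' direction I would argue by induction on $k$. The case $k=0$ is the empty graph, which is trivially the empty union of perfect matchings. For $k\ge 1$, let $X,Y$ be the two sides of the bipartition. Counting edges from each side gives $k|X|=|E(G)|=k|Y|$, hence $|X|=|Y|$. I would then verify Hall's condition on the side $X$: for $S\subseteq X$, every edge incident to $S$ has its other endpoint in $N(S)$, so $k|S|$, the number of such edges, is at most $k|N(S)|$, the number of edges incident to $N(S)$; therefore $|N(S)|\ge|S|$. By Hall's marriage theorem $G$ has a matching saturating $X$, and since $|X|=|Y|$ this is a perfect matching $\mathcal{M}$. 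Removing $\mathcal{M}$ from $G$ leaves a bipartite graph that is $(k-1)$-regular, which by the induction hypothesis decomposes into $k-1$ edge-disjoint perfect matchings; adjoining $\mathcal{M}$ yields the desired decomposition of $G$.

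The argument is entirely standard, so I do not anticipate a genuine obstacle; the one point needing a little care is the existence of a perfect matching in a $k$-regular bipartite graph, which is precisely where Hall's theorem and the bipartiteness hypothesis enter (a $k$-regular non-bipartite graph, such as $K_3$ with $k=2$, may have no perfect matching at all). If one prefers to avoid invoking Hall directly, the same conclusion follows from König's edge-colouring theorem, whose $k$ colour classes are automatically perfect matchings in the regular case.
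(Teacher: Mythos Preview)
Your proof is correct and entirely standard. The paper does not actually give a proof of this statement: it merely cites K\"onig's 1916 paper and remarks that the bipartite graph is allowed to have multiple edges. So there is nothing to compare against beyond noting that your argument is the usual one.

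One small point worth making explicit, since the paper relies on it: the applications in the paper use this theorem for bipartite \emph{multigraphs} (the graphs $H_3$ and $H_4$ constructed later may contain parallel edges). Your Hall-based argument goes through unchanged in that setting---the degree count $k|S|\le k|N(S)|$ is valid with multiplicities, Hall's condition refers only to the underlying simple neighbourhood, and removing a perfect matching still drops every degree by exactly one---but it would be worth saying so in a sentence, since the paper singles this out.
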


Notice that in Theorem \ref{konig}, the bipartite graph is allowed to have multiple edges.

\section{Wasserstein distance and diameter}\label{Wasserstein distance}
In this section, we will prove Theorem \ref{Wasserstein} which relates various Wasserstein distances to diameter bounds of graphs. This provides the basic philosophy of our method. 

We first recall the definition of Wasserstein distance.

 \begin{definition}[Wasserstein distance]
     Let $G=(V,E)$ be a graph, $\mu_1$ and $\mu_2$ be two probability measures on $V$. The Wasserstein distance $W_1(\mu_1, \mu_2)$ between $\mu_1$ and $\mu_2$ is defined as
     \[W_1(\mu_1,\mu_2)=\inf_{\pi}\sum_{y\in V}\sum_{x\in V}{\rm d}(x,y)\pi(x,y),\]
     where the infimum is taken over all maps $\pi: V\times V\to [0,1]$ satisfying
     \[\mu_1(x)=\sum_{y\in V}\pi(x,y),\,\,\mu_2(y)=\sum_{x\in V}\pi(x,y).\] Such a map is called a transport plan.
\end{definition}

For any $\varepsilon\in [0,1]$, let $\mu_x^\varepsilon$ be the probability measure defined as follows:
     \[\mu_x^\varepsilon(y)=\left\{
        \begin{array}{ll}
        \varepsilon, & \hbox{if $y=x$;} \\
        \frac{1-\varepsilon}{k_x}, & \hbox{if $y\sim x$;} \\
        0, & \hbox{otherwise.}
        \end{array}\right.
     \]

We prove the following diameter estimate using Wasserstein distance, which is an improvement of the discrete Bonnet-Myers theorem via Ollivier/Lin-Lu-Yau curvature \cite{LLY11,O09}. 
\begin{theorem}\label{Wasserstein}
    Let $G$ be a connected graph and $0\le\varepsilon <1$ be a constant. Let $q> 0$ and $p\ge 0$ be two integers. Let $C_1>0$ and $C_2$ be two constants such that \begin{itemize}
        \item[\rm (1)] $W_1(\mu_x^\varepsilon, \mu_y^\varepsilon) \le q-C_1$ for any two vertices $x,y$ with ${\rm d}(x,y)=q$,
        \item[\rm (2)] $W_1(\mu_x^1, \mu_y^\varepsilon)\le p+C_2$ for any two vertices $x,y$ with ${\rm d}(x,y)=p$.
    \end{itemize}
    Then $G$ is finite with diameter $d$ satisfying
    \begin{align}\label{eq:Wasserstein}
        d\le 2p-1+ \max\left\{0,\left(\left\lfloor\frac{2C_2}{C_1} \right\rfloor +1\right)q\right\}.
    \end{align}
\end{theorem}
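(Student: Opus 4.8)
The plan is to run a Bonnet--Myers-type argument by contradiction, but applied to a pair of vertices at a carefully chosen distance rather than to a pair realizing the diameter. Set $m_0:=\max\{0,\lfloor 2C_2/C_1\rfloor+1\}$ and $D:=2p+m_0q$. I will show that $G$ has no two vertices at distance $D$; since in a connected graph every value between $0$ and the diameter is realized as a distance, this forces the diameter $d$ to satisfy $d\le D-1=2p-1+m_0q=2p-1+\max\{0,(\lfloor 2C_2/C_1\rfloor+1)q\}$ (using $q>0$ for the last equality), and a connected locally finite graph of finite diameter is finite (local finiteness being implicit, since the measures $\mu^\varepsilon$ require finite valencies). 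The point of the choice $D=2p+m_0q$ is that a geodesic of length $D$ can be cut, using the marked positions $p,\,p+q,\,p+2q,\,\dots,\,p+m_0q$, into one stretch of length $p$ at each end and exactly $m_0$ intermediate stretches of length $q$, with no leftover — so hypotheses (1) and (2) apply to every stretch on the nose.

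Concretely, suppose ${\rm d}(x,y)=D$ and fix a geodesic $x=z_0\sim z_1\sim\cdots\sim z_D=y$. Recall that $\mu_x^1$ is the Dirac measure at $x$, so $W_1(\mu_x^1,\mu_y^1)={\rm d}(x,y)=D$. Applying the triangle inequality for $W_1$ (which is a metric on probability measures) to the chain of $m_0+3$ probability measures
\[
\mu_x^1,\quad \mu_{z_p}^\varepsilon,\quad \mu_{z_{p+q}}^\varepsilon,\quad \dots,\quad \mu_{z_{p+m_0q}}^\varepsilon,\quad \mu_y^1,
\]
and noting that ${\rm d}(x,z_p)=p$, that ${\rm d}(z_{p+iq},z_{p+(i+1)q})=q$ for $0\le i\le m_0-1$, and that ${\rm d}(z_{p+m_0q},y)=D-p-m_0q=p$, I bound the $m_0+2$ consecutive terms by $p+C_2$, by $m_0$ copies of $q-C_1$, and by $p+C_2$, using hypotheses (2), (1), (2) respectively. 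This gives
\[
D \;\le\; (p+C_2)+m_0(q-C_1)+(p+C_2) \;=\; D+2C_2-m_0C_1,
\]
hence $m_0C_1\le 2C_2$.

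It then remains to observe that $m_0C_1\le 2C_2$ is impossible. If $m_0=\lfloor 2C_2/C_1\rfloor+1$, then $m_0>2C_2/C_1$, and since $C_1>0$ this gives $m_0C_1>2C_2$, a contradiction. If $m_0=0$ — which, since $C_1>0$, happens precisely when $\lfloor 2C_2/C_1\rfloor+1\le 0$, i.e.\ when $C_2<0$ — then $m_0C_1=0\le 2C_2$ forces $C_2\ge 0$, again a contradiction. So no pair of vertices lies at distance $D$, and the diameter bound and finiteness follow as above.

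I do not anticipate a serious obstacle; the one thing to resist is the natural instinct to chain the Wasserstein estimates along a geodesic realizing the diameter itself. Doing so leaves a middle stretch of length $d-2p$, which is generally not a multiple of $q$, hence a leftover stretch of some length $s\in\{1,\dots,q-1\}$ on which neither hypothesis controls $W_1(\mu_u^\varepsilon,\mu_v^\varepsilon)$ — indeed this quantity can exceed ${\rm d}(u,v)$ (for instance on trees), so no crude fix recovers the stated bound. Restricting attention to a pair at distance exactly $2p+m_0q$ is precisely what eliminates this leftover, and the degenerate cases $p=0$ and $m_0=0$ are then absorbed into the same single computation rather than needing separate treatment.
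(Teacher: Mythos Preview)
Your proof is correct and follows essentially the same approach as the paper: both argue by contradiction that no pair of vertices can lie at distance $D=2p+m_0q$ (the paper phrases this as choosing $l$ with $l\ge 0$ and $l\ge\lfloor 2C_2/C_1\rfloor+1$, which amounts to your $m_0$), cut a geodesic of that length into two end segments of length $p$ and $m_0$ middle segments of length $q$, and apply the triangle inequality for $W_1$ together with hypotheses (1) and (2) to reach the contradiction $m_0C_1\le 2C_2$. Your explicit remarks on local finiteness and on why one should not chain along a diameter-realizing geodesic are nice additions but do not change the underlying argument.
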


\begin{proof}
    If \eqref{eq:Wasserstein} does not hold, there exist
    two vertices $x$ and $y$ with ${\rm d}(x,y)=D$ such that $D=2p+lq$, where $l$ is an integer satisfying
    \begin{align}\label{l}
        l\ge 0\ {\rm and}\ l\ge \left\lfloor\frac{2C_2}{C_1} \right\rfloor +1.
    \end{align}
    Let $L$ be a path of length $D$ connecting $x$ and $y$. On the path $L$, there is a sequence of vertices $x_0,x_1,...,x_{l}$ such that ${\rm d}(x,x_0)={\rm d}(x_{l} ,y)=p$ and ${\rm d}(x_{i-1},x_i)=q$ for $1\le i\le l$.

    Note that $W_1(\mu_x^1,\mu_y^1)=D$. It follows by the triangle inequality that
    \begin{align}\notag
    D=W_1\left(\mu_x^1,\mu_y^1\right) &\le W_1\left(\mu_x^1,\mu_{x_0}^{\varepsilon}\right) + \sum_{i=1}^l W_1\left(\mu_{x_{i-1}}^\varepsilon,\mu_{x_{i}}^\varepsilon\right) + W_1\left(\mu_{x_l}^\varepsilon, \mu_{y}^1\right)
    \\ \notag &\le 2(p+C_2)+l(q-C_1).
    \end{align}
    That is $lC_1\le 2C_2$, which is contradictory to \eqref{l}.
\end{proof}

\begin{remark}
    The Wasserstein distance and the Ollivier Ricci curvature are directly related. Let $G$ be a  connected graph. For $p\in [0, 1]$, the $p$-Ollivier Ricci curvature of two vertices $x, y$ in $G$ is defined as
    $$\kappa_p(x,y)=1-\frac{W_1(\mu_x^p,\mu_y^p)}{{\rm d}(x,y)}.$$
    In particular, we call the curvature $\kappa_p(x,y)$ "long scale" when ${\rm d}(x, y)\ge 2$.
    The concept of Ollivier Ricci curvature was introduced by Ollivier in \cite{O09}, and the long-scale Ollivier Ricci curvature was futher studied in \cite{CK19}.
\end{remark}

\section{Proofs of Theorems \ref{diameter} and \ref{diameter2}}
In this section, we prove Theorems \ref{diameter} and \ref{diameter2} via (the philosophy of) Theorem \ref{Wasserstein}. For that purpose, we first show two Wasserstein distance estimates, stated as Theorems \ref{WassersteinJump} and \ref{WassersteinDRG} below.

\begin{theorem}\label{WassersteinJump}
    Let $G$ be a connected graph and $0\le\varepsilon <1$ be a constant. Let $x$ and $y$ be two vertices in $G$ at distance $p$, then $$W\left(\mu_x^{1}, \mu_y^{\varepsilon}\right)\le p+\frac{(1-\varepsilon)(|B_p(x,y)|-|C_p(x,y)|)}{k_y}.$$
\end{theorem}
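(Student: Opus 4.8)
The plan is to construct an explicit transport plan from $\mu_x^1$ (the unit mass at $x$) to $\mu_y^\varepsilon$ and to bound its cost. Write $k=k_y$. The measure $\mu_y^\varepsilon$ puts mass $\varepsilon$ on $y$ and mass $(1-\varepsilon)/k$ on each of the $k$ neighbors of $y$; those neighbors split into the three sets $A_p(x,y)$, $B_p(x,y)$, $C_p(x,y)$ (vertices at distance $p$, $p+1$, $p-1$ from $x$ respectively), since $\mathrm d(x,y)=p$. The entire source mass $1$ sits at $x$, so a transport plan here is simply a choice of how to distribute the unit mass at $x$ among $\{y\}\cup S_1(y)$ matching the prescribed target masses; the cost is then $\sum_z \mathrm d(x,z)\,\mu_y^\varepsilon(z)$, which is completely forced. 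So in fact $W_1(\mu_x^1,\mu_y^\varepsilon)=\sum_{z}\mathrm d(x,z)\mu_y^\varepsilon(z)$ exactly (there is no optimization to do once the source is a single atom), and the whole content is to evaluate this sum.

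Carrying out the evaluation: the mass $\varepsilon$ at $y$ contributes $\varepsilon p$. The neighbors of $y$ in $C_p(x,y)$ are at distance $p-1$ and contribute $\frac{1-\varepsilon}{k}(p-1)|C_p(x,y)|$; those in $A_p(x,y)$ are at distance $p$ and contribute $\frac{1-\varepsilon}{k}\,p\,|A_p(x,y)|$; those in $B_p(x,y)$ are at distance $p+1$ and contribute $\frac{1-\varepsilon}{k}(p+1)|B_p(x,y)|$. Since $|A_p|+|B_p|+|C_p|=k$, the sum of all these is $\varepsilon p + \frac{1-\varepsilon}{k}\big(pk + |B_p(x,y)| - |C_p(x,y)|\big) = p + \frac{(1-\varepsilon)(|B_p(x,y)|-|C_p(x,y)|)}{k}$, which is exactly the claimed bound (with equality, in fact). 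The inequality sign in the statement is just the trivial direction $W_1\le(\text{cost of a particular plan})$.

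I do not anticipate a real obstacle here: the only subtlety is making sure that every neighbor of $y$ indeed lies in exactly one of $A_p,B_p,C_p$ — which holds because $\mathrm d(x,\cdot)$ can change by at most $1$ along an edge and $\mathrm d(x,y)=p$ — and that the single-atom source makes the transport cost independent of the plan. One should also note the degenerate case $p=0$ (then $x=y$, $C_0=\emptyset$, $B_0=S_1(x)$, and the formula reads $W_1=\frac{(1-\varepsilon)k}{k}=1-\varepsilon$, consistent with $W_1(\mu_x^1,\mu_x^\varepsilon)=1-\varepsilon$), but this edge case requires no separate argument. The proof is therefore a short direct computation rather than anything requiring König's theorem or matching arguments; those tools are presumably needed only for the harder estimate in Theorem \ref{WassersteinDRG}.
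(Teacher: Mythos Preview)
Your proof is correct and follows essentially the same approach as the paper: both construct the obvious transport plan sending the unit mass at $x$ to the distribution $\mu_y^\varepsilon$ and compute its cost by splitting $S_1(y)$ into $A_p$, $B_p$, $C_p$. Your additional remark that the plan is forced (so the inequality is in fact an equality) is correct and a slight sharpening of what the paper states, but otherwise the arguments are identical.
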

\begin{proof}
    We consider the following particular transport plan $\pi_0$ from $\mu_x^{1}$ to $\mu_y^{\varepsilon}$:
\begin{center}
$\pi_0(v,u)=\begin{cases}
\varepsilon, &{\rm if}\ v=x, u=y;\\
\frac{1-\varepsilon}{k_y}, &{\rm if}\ v=x, u\sim y;\\
0, &{\rm otherwise}.
\end{cases}$
\end{center}
In $S_1(y)$, there are $|A_p(x,y)|$ vertices at distance $p$ from $x$, $|B_p(x,y)|$ vertices at distance $p+1$ from $x$ and $|C_p(x,y)|$ vertices at distance $p-1$ from $x$. Thus,
\begin{align}\notag
    W\left(\mu_x^{1}, \mu_y^{\varepsilon}\right) &\le \sum_{v\in V}\sum_{u\in V}{\rm d}(v,u)\pi_0(v,u)\\ \notag &\le \varepsilon p + \frac{1-\varepsilon}{k_y}\left(|A_p(x,y)|p+|B_p(x,y)|(p+1)+|C_p(x,y)|(p-1)\right) \\ \notag &=p+\frac{(1-\varepsilon)(|B_p(x,y)|-|C_p(x,y)|)}{k_y},
\end{align}
completing the proof.
\end{proof}

\begin{theorem}\label{WassersteinDRG}
    Let $G$ be a distance-regular graph of diameter $d$ and valency $k$. Let $q$ be an integer with $1\le q\le d-1$ such that $a_{q-1}=0$, $c_{q+1}>c_q$ and $c_{q+1}\ge a_{q}$. Let $x$ and $y$ be two vertices in $G$ with ${\rm d}(x,y)=q$, then $$W\left(\mu_x^{\frac{1}{k+1}}, \mu_y^{\frac{1}{k+1}}\right)\le q-\frac{2c_q + M}{k+1},$$where
    $$M=\left\lceil \frac{a_q(c_{q+1}-a_{q})}{c_{q+1}-c_q} \right\rceil.$$
\end{theorem}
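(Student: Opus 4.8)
The plan is to construct an explicit transport plan from $\mu_x^{1/(k+1)}$ to $\mu_y^{1/(k+1)}$ and bound its cost. Write $\varepsilon = \frac{1}{k+1}$, so that $\mu_x^\varepsilon$ assigns mass $\varepsilon$ to $x$ and mass $\frac{1-\varepsilon}{k}=\frac{1}{k+1}$ to each of the $k$ neighbors of $x$; thus $\mu_x^\varepsilon$ is the uniform distribution on the $k+1$ vertices of the closed ball $\{x\}\cup S_1(x)$, and similarly for $y$. Hence it suffices to move, at total cost $\le q(k+1) - (2c_q+M)$, the $k+1$ unit masses sitting on $\{x\}\cup S_1(x)$ onto the $k+1$ sites of $\{y\}\cup S_1(y)$; equivalently, since König's theorem (Theorem \ref{konig}) lets one decompose any bipartite regular (multi)graph into perfect matchings, I will produce a bipartite structure realizing an optimal-enough pairing. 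Concretely I would partition $S_1(x)$ according to distance to $y$ into $C_q(x,y)$ (distance $q-1$, size $c_q$), $A_q(x,y)$ (distance $q$, size $a_q$), $B_q(x,y)$ (distance $q+1$, size $b_q$), and likewise partition $S_1(y)$ by distance to $x$; the vertex $x$ itself is at distance $q$ from $y$, and $y$ at distance $q$ from $x$.

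The key point is that the hypotheses $a_{q-1}=0$, $c_{q+1}>c_q$ and $c_{q+1}\ge a_q$ force a rigid local structure that lets me save more than the "obvious" $2c_q$. The naive plan pairs $x\mapsto y$ (cost $q$), pairs $C_q(x,y)$ (distance $q-1$ from $x$) with $C_q(y,x)$ (distance $q-1$ from $y$), each such pair at distance $\le q-2$... actually at distance $\le q-1$, and so on; a careful bookkeeping of these distances already yields the $2c_q$ saving. The extra term $M=\lceil a_q(c_{q+1}-a_q)/(c_{q+1}-c_q)\rceil$ must come from the vertices in $A_q(x,y)$ and $A_q(y,x)$: although a vertex $u\in A_q(x,y)\subseteq S_1(y)$ is at distance $q$ from $x$ (giving no saving if matched to a distance-$q$ target), I would instead match such vertices, or a controlled fraction of them, to targets at distance $q-1$. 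The condition $a_{q-1}=0$ means that for $u\in A_q(x,y)$ the set $C_q(u,x)$ of common neighbors of $u$ and moving toward $x$ has size $c_q$, while $A_q(u,x)$ has size $a_q$; counting edges between $A_q(x,y)$ and $C_{q+1}$-type sets, and using $c_{q+1}>c_q$ to guarantee these neighborhoods are nonempty and large enough, produces via a double-counting / Hall-type argument (again invoking König) a partial matching of $a_q$ of the $A$-vertices to distance-$(q-1)$ sites at a rate governed exactly by the ratio $(c_{q+1}-a_q)/(c_{q+1}-c_q)$, hence the ceiling $M$. I would formalize the edge count between $A_q(x,y)$ and the relevant sphere around a fixed neighbor, show the resulting bipartite multigraph is regular enough to apply König, extract the perfect matching, and read off that the induced transport saves an additional $M$ units.

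Assembling: the transport cost is at most $\varepsilon\cdot q$ (mass on $x$) plus $\frac{1}{k+1}$ times the sum over $S_1(x)$ of the distance to the matched target in $\{y\}\cup S_1(y)$; the matching is designed so this sum is $\le q k - (2c_q+M) + q$ (the last $+q$ absorbing the $x$-to-$y$ leg into the uniform count of $k+1$ sites). Dividing by $k+1$ gives $W\bigl(\mu_x^\varepsilon,\mu_y^\varepsilon\bigr)\le q - \frac{2c_q+M}{k+1}$, as claimed.

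The main obstacle is the combinatorial heart of the second paragraph: proving that exactly $M$ (and not merely $\lceil a_q\rceil = a_q$, or something weaker) units can be saved from the $A_q$-vertices. This requires pinning down the bipartite graph between $A_q(x,y)$ and a suitable distance class, verifying its regularity (or near-regularity) so that König's theorem applies cleanly, and checking that the fractional transport implicit in the ceiling can be realized by an honest integer transport plan after clearing denominators. I expect the hypotheses $c_{q+1}>c_q$ and $c_{q+1}\ge a_q$ to be used precisely here, to guarantee the relevant degrees are large enough that the matching exists; $a_{q-1}=0$ will be used to control $C$- and $A$-set sizes one step closer to $x$.
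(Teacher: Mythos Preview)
Your overall framework matches the paper's: uniform mass $\frac{1}{k+1}$ on the closed ball, split $S_1(x)$ and $S_1(y)$ into $C_q,A_q,B_q$, send $x\mapsto y$ at cost $q$, pair the $C_q$'s cheaply, and squeeze the extra saving $M$ out of the $A_q$'s via K\"onig. But you have not actually carried out the two combinatorial steps that make this work, and your own last paragraph correctly flags that the proof is incomplete without them.

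First, the $C_q$ pairing is not ``at distance $\le q-2$\dots actually $\le q-1$'': one must \emph{prove} that a bijection $\phi:C_q(y,x)\to C_q(x,y)$ with $d(v,\phi(v))=q-2$ exists. The paper does this by checking that for $v\in C_q(y,x)$ one has $C_{q-1}(v,y)\subseteq C_q(x,y)$, so the bipartite ``distance $q-2$'' graph between $C_q(y,x)$ and $C_q(x,y)$ is $c_{q-1}$-regular, and K\"onig gives $\phi$. This is what yields the clean $2c_q$ saving.

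Second, and this is the heart of the matter you left open: the extra $M$ does \emph{not} come from matching $A_q$-vertices into some separate distance class; it comes from a single bipartite multigraph on $A_q(y,x)\cup B_q(y,x)$ versus $A_q(x,y)\cup B_q(x,y)$. One first establishes (again via K\"onig, using $a_{q-1}=0$ to get $c_q$-regularity) a bijection $\varphi:A_q(y,x)\to A_q(x,y)$ with $d(v,\varphi(v))=q-1$. Then one forms the multigraph with edge set $E_1\cup E_2$, where $E_1$ is all pairs at distance exactly $q$ and $E_2$ consists of $c_{q+1}-a_q$ parallel copies of each $\varphi$-edge. A direct count (this is where $a_{q-1}=0$, $c_{q+1}>c_q$ and $c_{q+1}\ge a_q$ are all used) shows this multigraph is $(c_{q+1}-c_q)$-regular. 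K\"onig decomposes it into $c_{q+1}-c_q$ perfect matchings; since $|E_2|=a_q(c_{q+1}-a_q)$, pigeonhole gives one matching $\mathcal M$ with $|\mathcal M\cap E_2|\ge M$. That matching, together with $\phi$ and $x\mapsto y$, is the transport plan, and your worry about ``fractional transport'' and ``clearing denominators'' evaporates: the plan is integral from the start.
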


By the definition of a distance-regular graph, we have
$$|A_q(y,x)|=|A_q(x,y)|=a_q,|B_q(y,x)|=|B_q(x,y)|=b_q\ {\rm and}\ |C_q(y,x)|= |C_q(x,y)|=c_q.$$
We first prove the following two lemmas.

\begin{lemma}
If $q\ge 2$, then there exists a bijection $\phi$ from $C_q(y,x)$ to $C_q(x,y)$ such that ${\rm d}(v,\phi(v))=q-2$ for every $v\in C_q(y,x)$.
\end{lemma}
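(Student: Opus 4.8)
The plan is to construct an explicit bijection $\phi\colon C_q(y,x)\to C_q(x,y)$ realizing distance $q-2$, exploiting the hypothesis $a_{q-1}=0$. Recall $C_q(y,x)=S_{q-1}(y)\cap S_1(x)$ consists of neighbors of $x$ at distance $q-1$ from $y$, while $C_q(x,y)=S_{q-1}(x)\cap S_1(y)$ consists of neighbors of $y$ at distance $q-1$ from $x$; both sets have size $c_q$. The key observation is that $a_{q-1}=0$ forces a rigid structure on shortest paths: if $w\in S_{q-1}(x)\cap S_{q-1}(y)$ then $w$ would be at distance $q-1$ from both endpoints of an edge... more to the point, I want to use $a_{q-1}=0$ to say that the "middle" of a geodesic between $x$ and $y$ is essentially unique in a controlled sense, or that certain distance-$(q-1)$ spheres intersect trivially.

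**First I would** fix $v\in C_q(y,x)$, so $v\sim x$ and ${\rm d}(v,y)=q-1$. I want to produce $\phi(v)\sim y$ with ${\rm d}(\phi(v),x)=q-1$ and ${\rm d}(v,\phi(v))=q-2$. The natural candidate: take a shortest path from $v$ to $y$ of length $q-1$, say $v=v_0\sim v_1\sim\cdots\sim v_{q-1}=y$, and set $\phi(v)=v_{q-2}$. Then ${\rm d}(\phi(v),v)={\rm d}(v_{q-2},v_0)=q-2$ (since it lies on a geodesic), and ${\rm d}(\phi(v),y)=1$. It remains to check ${\rm d}(\phi(v),x)=q-1$: we have ${\rm d}(\phi(v),x)\le {\rm d}(\phi(v),v)+{\rm d}(v,x)=(q-2)+1=q-1$ and ${\rm d}(\phi(v),x)\ge {\rm d}(y,x)-{\rm d}(y,\phi(v))=q-1$, so indeed ${\rm d}(\phi(v),x)=q-1$, giving $\phi(v)\in C_q(x,y)$. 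The work is then to show (a) $\phi(v)$ is well-defined, i.e., independent of the chosen geodesic, and (b) $\phi$ is a bijection. For well-definedness and injectivity I expect to invoke $a_{q-1}=0$: since ${\rm d}(v,y)=q-1$ and $v\sim x$ with ${\rm d}(x,y)=q$, the vertex $\phi(v)$ lies in $C_{q-1}(v,y)$ or a comparable intersection set of size $c_{q-1}$; the condition $a_{q-1}=0$ means that $|A_{q-1}|=0$, i.e. two vertices at distance $q-1$ have no common neighbor at distance $q-1$ from either, which should pin down $\phi(v)$ as the unique neighbor of $y$ at distance $q-2$ from $v$.

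**The hard part will be** nailing the uniqueness/bijectivity argument cleanly. Concretely, given $v\in C_q(y,x)$, I claim there is exactly one $w\sim y$ with ${\rm d}(w,v)=q-2$: existence is the geodesic construction above; for uniqueness, suppose $w,w'\sim y$ both have distance $q-2$ from $v$. Both $w,w'$ lie in $S_{q-1}(x)$ (by the triangle-inequality sandwich above), and ${\rm d}(v,w)={\rm d}(v,w')=q-2$ while ${\rm d}(v,y)=q-1$, so $w,w'\in C_{q-1}(y,v)$ has size $c_{q-1}$ — that alone does not give uniqueness, so I need to extract more from $a_{q-1}=0$. The cleaner route: show the map is injective by noting that if $\phi(v)=\phi(v')=w$ for $v\neq v'$ in $C_q(y,x)$, then $v,v'$ are both neighbors of $x$ at distance $q-2$ from $w$ and distance $q-1$ from $y$, and $w\sim y$; one then analyzes the configuration around $w$ using $a_{q-1}=0$ to derive a contradiction, and since $|C_q(y,x)|=|C_q(x,y)|=c_q$ an injection between equal finite sets is a bijection. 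I would look for the precise combinatorial identity in Brouwer–Cohen–Neumaier that, under $a_{q-1}=0$, makes the "descent by one step toward $y$" a genuine bijection on the $C$-sets; this is the one place where I anticipate needing care rather than a one-line triangle inequality.
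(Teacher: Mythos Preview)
Your proposal has a genuine gap at exactly the point you yourself flagged as ``the hard part.'' For $v\in C_q(y,x)$ you try to set $\phi(v)$ to be the penultimate vertex on a shortest $v$--$y$ path, and you hope $a_{q-1}=0$ will make this well-defined. It does not. The set of neighbors of $y$ at distance $q-2$ from $v$ is precisely $C_{q-1}(v,y)$, which has size $c_{q-1}$; there is no reason for $c_{q-1}$ to equal $1$, and the hypothesis $a_{q-1}=0$ says nothing about it. (In fact $a_{q-1}=0$ is not used at all in this lemma; it is needed only in the companion lemma about $A_q$.) The same counting kills your injectivity attempt: if $w\in C_q(x,y)$, then the neighbors of $x$ at distance $q-2$ from $w$ form $C_{q-1}(w,x)$, again of size $c_{q-1}$, so a single $w$ can legitimately arise from several $v$'s.

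The paper sidesteps the non-uniqueness entirely. It shows that every $v\in C_q(y,x)$ has exactly $c_{q-1}$ vertices of $C_q(x,y)$ at distance $q-2$ (your triangle-inequality sandwich already gives $C_{q-1}(v,y)\subset C_q(x,y)$, and conversely any $u\in C_q(x,y)$ with ${\rm d}(u,v)=q-2$ lies in $C_{q-1}(v,y)$). By symmetry the same holds with the roles of $x$ and $y$ swapped. Hence the bipartite graph on $C_q(y,x)\sqcup C_q(x,y)$ with edges given by ``distance $q-2$'' is $c_{q-1}$-regular, and K\"onig's theorem furnishes a perfect matching, which is the desired bijection $\phi$. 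The point is that you do not need a canonical $\phi$; any perfect matching will do, and regularity of the auxiliary bipartite graph guarantees one exists.
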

\begin{proof}
For any $v\in C_q(y,x)$, we have ${\rm d}(v,y)=q-1$. We claim that $C_{q-1}(v,y)\subset C_q(x,y)$. Indeed, for any $u\in C_{q-1}(v,y)$, we have ${\rm d}(v,u)=q-2$, and hence ${\rm d}(x,u)\le q-1$. It follows that $u\in C_q(x,y)$. Therefore, there are exactly $c_{q-1}$ vertices in $C_q(x,y)$ at distance $q-2$ from $v$. By symmetry, for any $u\in C_q(x,y)$, there are exactly $c_{q-1}$ vertices in $C_q(y,x)$ at distance $q-2$ from $u$.

Construct a bipartite graph $H_1$ with bipartition $\{ C_q(y,x), C_q(x,y)\}$. For $v\in C_q(y,x)$ and $u\in C_q(x,y)$, $v$ and $u$ are adjacent if ${\rm d}(v,u)=q-2$. Then, $H_1$ is $c_{q-1}$-regular. Theorem \ref{konig} implies a desired bijection.
\end{proof}

\begin{lemma}
There is a bijection $\varphi$ from $A_q(y,x)$ to $A_q(x,y)$ such that ${\rm d}(v,\varphi(v))=q-1$ for every $v\in A_q(y,x)$.
\end{lemma}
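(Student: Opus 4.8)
The plan is to imitate the proof of the preceding lemma almost verbatim, replacing $C_q$ by $A_q$ and the displacement $q-2$ by $q-1$: I will build a bipartite graph $H_2$ with bipartition $\{A_q(y,x),A_q(x,y)\}$ in which $v\in A_q(y,x)$ and $u\in A_q(x,y)$ are joined exactly when ${\rm d}(v,u)=q-1$, show that $H_2$ is regular, and then apply König's theorem (Theorem \ref{konig}) to extract a perfect matching, which is the desired bijection $\varphi$. Since $|A_q(y,x)|=|A_q(x,y)|=a_q$ by distance-regularity, the only thing to check is that $H_2$ is regular of some fixed positive degree, and the natural candidate for that degree is $c_q$.

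The heart of the matter is the claim that $C_q(v,y)\subseteq A_q(x,y)$ for every $v\in A_q(y,x)$. Here is how I would argue it. Fix $v\in A_q(y,x)$, so ${\rm d}(v,y)=q$ and $v\sim x$, and let $u\in C_q(v,y)$, i.e.\ ${\rm d}(u,v)=q-1$ and $u\sim y$. Two applications of the triangle inequality, one to the triple $u,v,x$ and one to $u,y,x$ (using ${\rm d}(x,y)=q$), pin ${\rm d}(u,x)$ down to lie in $\{q-1,q\}$. The case ${\rm d}(u,x)=q-1$ is exactly what the hypothesis $a_{q-1}=0$ forbids: if it occurred, then $x$ would be a vertex adjacent to $v$ and at distance $q-1$ from $u$, i.e.\ $x\in A_{q-1}(u,v)$, contradicting $a_{q-1}=|A_{q-1}(u,v)|=0$ (legitimate since ${\rm d}(u,v)=q-1$). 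Hence ${\rm d}(u,x)=q$ and $u\in S_q(x)\cap S_1(y)=A_q(x,y)$. Conversely, any $u\in A_q(x,y)$ with ${\rm d}(v,u)=q-1$ lies in $C_q(v,y)$ for trivial reasons, so the number of neighbours of $v$ in $H_2$ is exactly $|C_q(v,y)|=c_q$ (using ${\rm d}(v,y)=q$).

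All the hypotheses are symmetric in $x$ and $y$, so the identical argument with $x$ and $y$ interchanged shows that every $u\in A_q(x,y)$ has exactly $c_q$ neighbours in $H_2$. Thus $H_2$ is $c_q$-regular (and $c_q\ge c_1=1>0$), König's theorem decomposes it into $c_q$ edge-disjoint perfect matchings, and any one of these matchings is a bijection $\varphi\colon A_q(y,x)\to A_q(x,y)$ with ${\rm d}(v,\varphi(v))=q-1$ throughout. (The boundary case $q=1$ is harmless: there $A_1(y,x)=A_1(x,y)$ and $\varphi$ is the identity.)

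The only non-mechanical point in all of this is the exclusion of ${\rm d}(u,x)=q-1$, which is precisely where $a_{q-1}=0$ enters; the counting, the symmetry reduction, and the appeal to König are routine and parallel the previous lemma exactly, so I do not anticipate any genuine obstacle beyond that one step.
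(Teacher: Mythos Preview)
Your proposal is correct and follows essentially the same route as the paper: build the bipartite graph $H_2$ on $A_q(y,x)\cup A_q(x,y)$ with edges given by distance $q-1$, prove the key inclusion $C_q(v,y)\subseteq A_q(x,y)$ (using $a_{q-1}=0$ to rule out ${\rm d}(u,x)=q-1$), deduce $c_q$-regularity, and apply K\"onig's theorem. The only cosmetic difference is that the paper phrases the contradiction as $v\in A_{q-1}(u,x)$ rather than your $x\in A_{q-1}(u,v)$, which is the same observation read from the other endpoint.
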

\begin{proof}
For any $v\in A_q(y,x)$, we have ${\rm d}(v,y)=q$. We claim that $C_{q}(v,y)\subset A_q(x,y)$. Indeed, for any $u\in C_{q}(v,y)$, we have ${\rm d}(v,u)=q-1$, and hence ${\rm d}(x,u)\le q$. It follows that $u\notin B_q(x,y)$. If $u\in C_q(x,y)$, then $v\in A_{q-1}(u,x)$, which is contradictory to $|A_{q-1}(u,x)|=a_{q-1}=0$. Thus we have $u\in A_q(x,y)$ and the claim is proved. Therefore, there are exactly $c_{q}$ vertices in $A_q(x,y)$ at distance $q-1$ from $v$. By symmetry, for any $u\in A_q(x,y)$, there are exactly $c_{q}$ vertices in $A_q(y,x)$ at distance $q-1$ from $u$.

Similarly, we construct a bipartite graph $H_2$ with bipartition $\{ A_q(y,x), A_q(x,y)\}$. For $v\in A_q(y,x)$ and $u\in A_q(x,y)$, $v$ and $u$ are adjacent if ${\rm d}(v,u)=q-1$. Then, $H_2$ is $c_{q}$-regular. Theorem \ref{konig} implies a desired bijection.
\end{proof}
\begin{proof}[Proof of Theorem \ref{WassersteinDRG}] If $q\ge 2$, we construct a bipartite multigraph $H_3$ with bipartition 
$$\{ A_q(y,x)\cup B_q(y,x), A_q(x,y)\cup B_q(x,y)\}.$$
The edge set of $H_3$ is given by $E_H=E_1\cup E_2$, where
\begin{align}\notag
&E_1=\{vu|v\in A_q(y,x)\cup B_q(y,x), u\in A_q(x,y)\cup B_q(x,y), {\rm d}(v,u)=q\},\\ \notag
&E_2=\{e_v^j|e_v^j=v\varphi(v), v\in A_q(y,x), 1\le j\le c_{q+1}-a_q\}.
\end{align}
We explain that $E_2$ contains $c_{q+1}-a_q$ number of parallel edges between $v$ and $\varphi(v)$ for each $v\in A_q(y,x)$, and $E_2=\emptyset$ when $c_{q+1}= a_q$.

We claim that $H_3$ is $(c_{q+1}-c_q)$-regular.
For any $v\in B_q(y,x)$, we have ${\rm d}(v,y)=q+1$. There are exactly $c_{q+1}$ vertices in $S_1(y)$ at distance $q$ from $v$. For any $u\in C_q(x,y)$, since ${\rm d}(x,u)=q-1$, we have ${\rm d}(v,u)\le q$. Since ${\rm d}(v,y)=q+1$, we have ${\rm d}(v,u)\ge q$. It follows that ${\rm d}(v,u)= q$. Thus, there are exactly $c_{q+1}-c_q$ vertices in $A_q(x,y)\cup B_q(x,y)$ at distance $q$ from $v$. That is, the valency of $v$ in $H_3$ is $c_{q+1}-c_q$.

For any $v\in A_q(y,x)$, we have ${\rm d}(v,y)=q$. There are exactly $a_q$ vertices in $S_1(y)$ at distance $q$ from $v$. For any $u\in C_q(x,y)$, since ${\rm d}(x,u)=q-1$, we have ${\rm d}(v,u)\le q$. Since ${\rm d}(v,y)=q$, we have ${\rm d}(v,u)\ge q-1$. If ${\rm d}(v,u)= q-1$, then $v\in A_{q-1}(u,x)$, which is contradictory to $|A_{q-1}(u,x)|=a_{q-1}=0$. Thus, ${\rm d}(v,u)= q$. It follows that there are exactly $a_q-c_q$ vertices in $A_q(x,y)\cup B_q(x,y)$ at distance $q$ from $v$. Together with the $c_{q+1}-a_q$ parallel edges in $E_2$, the valency of $v$ in $H_3$ is $c_{q+1}-c_q$.

By symmetry, the valency of each vertex in $A_q(x,y)\cup B_q(x,y)$ is also $c_{q+1}-c_q$. Thus, $H_3$ is $(c_{q+1}-c_q)$-regular, as claimed.

By Theorem \ref{konig}, $E_H$ can be decomposed into $(c_{q+1}-c_q)$ edge-disjoint perfect matchings. Since $|E_2|=a_q(c_{q+1}-a_{q})$, there is a perfect matching $\mathcal M$ such that 
\begin{equation*}\label{ME}
    |\mathcal M\cap E_2|\ge M:= \left\lceil \frac{a_q(c_{q+1}-a_{q})}{c_{q+1}-c_q} \right\rceil.
\end{equation*}

We consider the following particular transport plan $\pi_0$ from $\mu_x^{\frac{1}{k+1}}$ to $\mu_y^{\frac{1}{k+1}}$:
\begin{center}
$\pi_0(v,u)=\begin{cases}
\frac{1}{k+1}, &{\rm if}\ v=x, u=y;\\
\frac{1}{k+1}, &{\rm if}\ v\in C_q(y,x), u=\phi(v);\\
\frac{1}{k+1}, &{\rm if}\ v\in A_q(y,x)\cup B_q(y,x), u\in A_q(x,y)\cup B_q(x,y), vu\in {\mathcal M};\\
0, &{\rm otherwise}.
\end{cases}$
\end{center}

It is direct to check that $\pi_0$ is indeed a transport plan. By the definition of $\phi$, we have ${\rm d}(v,\phi(v))=q-2$ for any $v\in C_q(y,x)$. For any $vu\in {\mathcal M}$, it follows by the definition of $E_1$ and $E_2$ that ${\rm d}(v,u)$ equals to $q$ if $vu\in E_1$ and $q-1$ if $vu\in E_2$. Therefore, we have
\begin{align}\notag
W\left(\mu_x^{\frac{1}{k+1}},\mu_y^{\frac{1}{k+1}}\right)
&\le \sum_{v\in V}\sum_{u\in V}{\rm d}(v,u)\pi_0(v,u)\\ \notag
&= \frac{1}{k+1}\left( q+c_q(q-2)+ |{\mathcal M}\cap E_2|(q-1) +(|{\mathcal M}|-|{\mathcal M}\cap E_2|)q \right)\\ \notag
&=\frac{1}{k+1}\left( q+c_q(q-2) -|{\mathcal M}\cap E_2|+ |{\mathcal M}|q \right)\\ \notag
&\le \frac{1}{k+1}\left( q+c_q(q-2) - M +(a_q+b_q)q \right)\\ \notag
&=q-\frac{2c_q + M}{k+1}.
\end{align}

If $q=1$, then $A_q(y,x)=A_q(x,y)$. This case has been discussed in \cite[Proof of Theorem 3.1]{CHLZ24}. For readers' convenience, we present the argument here. 
Let us denote the $a_1$ vertices in $A_1(y,x)$ by $z_1, \cdots, z_{a_1}$.
We construct a bipartite multigraph $H_4$ with bipartition 
$$\{A_1(y,x)\cup B_1(y,x), A'_1(x,y)\cup B_1(x,y)\}.$$
Here $A'_1(x,y):=\{z'_1, \cdots, z'_{a_1}\}$ is a new added set with ${a_1}$ vertices, which is considered as a copy of $A_1(y,x)$. The edge set of 
$H_4$ is given by $E_H:=\cup_{i=1}^5E_i$, where
\begin{align}\notag
&E_1=\{vu|v\in B_1(y,x), u\in B_1(x,y), v\sim u\},\\ \notag
&E_2=\{vz'_i|v\in B_1(y,x), z'_i\in A'_1(x,y), v\sim z_i\},\\ \notag
&E_3=\{z_iu| z_i\in A_1(y,x),u\in B_1(x,y), z_i\sim u\},\\ \notag
&E_4=\{z_iz'_j| z_i\sim z_j,1\le i\le a_1,1\le j\le a_1 \},\\ \notag
&E_5=\{e_i^j|e_i^j=z_iz'_i, 1\le i\le a_1, 1\le j\le c_2-a_1\}.
\end{align}
Similarly, we can prove that $H_4$ is $(c_2-c_1)$-regular. By Theorem \ref{konig}, $E_H$ can be decomposed into $c_2-c_1$ edge-disjoint perfect matchings. Since $|E_5|=a_1(c_{2}-a_{1})$, there is a perfect matching $\mathcal M$ such that 
\begin{equation*}
    |\mathcal M\cap E_5|\ge M:= \left\lceil \frac{a_1(c_{2}-a_{1})}{c_{2}-c_1} \right\rceil.
\end{equation*}
We consider the following particular transport plan $\pi_0$ from $\mu_x^{\frac{1}{k+1}}$ to $\mu_y^{\frac{1}{k+1}}$:
\begin{center}
$\pi_0(v,u)=\begin{cases}
\frac{1}{k+1}, &{\rm if}\ v\in B_1(y,x)\cup A_1(y,x), u\in B_1(x,y)\ {\rm and}\ vu\in {\mathcal M};\\
\frac{1}{k+1}, &{\rm if}\ v\in B_1(y,x)\cup A_1(y,x),u\in A_1(x,y)\ {\rm and}\ vu'\in {\mathcal M};\\
0, &{\rm otherwise}.
\end{cases}$
\end{center}
It is direct to check that $\pi_0$ is indeed a transport plan. There are $|{\mathcal M}|$ pairs of $(v,u)$ such that $\pi_0(v,u)\ne 0$. Among them, there are $|{\mathcal M}\cap E_5|$ pairs with ${\rm d}(v,u)=0$ and $|{\mathcal M}|-|{\mathcal M}\cap E_5|$ pairs with ${\rm d}(v,u)=1$. Therefore, we have
\begin{align}\notag
W\left(\mu_x^{\frac{1}{k+1}},\mu_y^{\frac{1}{k+1}}\right)&\le \sum_{v\in V}\sum_{u\in V}{\rm d}(v,u)\pi_0(v,u)\\ \notag
&= \frac{1}{k+1}(|{\mathcal M}|-|{\mathcal M}\cap E_5|)\\ \notag
&\le \frac{1}{k+1}\left(a_1+b_1-M\right)\\ \notag
&= 1-\frac{2+M}{k+1}.
\end{align}
We complete the proof.
\end{proof}

Now, we are prepared to prove Theorem \ref{diameter2} and Theorem \ref{diameter}.

\begin{proof}[Proof of Theorem \ref{diameter2}]
    For any two vertices $x,y$ with ${\rm d}(x,y)=p$,    
    Theorem \ref{WassersteinJump} shows that
    \begin{align}\label{JumpDRG}
        W\left(\mu_x^{1}, \mu_y^{\frac{1}{k+1}}\right)\le p+\frac{b_p-c_p}{k+1}.
    \end{align}
    The result then follows by Theorem \ref{Wasserstein} and Theorem \ref{WassersteinDRG}.
\end{proof}

\begin{proof}[Proof of Theorem \ref{diameter}]
    There exist two integers $l$ and $r$ with $l\ge 0$ and $0\le r\le q-1$ such that $d-p=lq+r$. Let $x$ and $y$ be two vertices with ${\rm d}(x,y)=d$. Let $L$ be a path of length $d$ connecting $x$ and $y$. On the path $L$, there is a sequence of vertices $x_0,x_1,...,x_{l}$ such that ${\rm d}(x,x_0)=p$, ${\rm d}(x_{i-1},x_i)=q$ for $1\le i\le l$, and ${\rm d}(x_{l},y)=r$.

    It follows by the triangle inequality that
    $$W_1\left(\mu_x^1,\mu_y^1\right) \le W_1\left(\mu_x^1,\mu_{x_0}^{\frac{1}{k+1}}\right) + \sum_{i=1}^l W_1\left(\mu_{x_{i-1}}^\frac{1}{k+1},\mu_{x_{i}}^\frac{1}{k+1}\right) + W_1\left(\mu_{x_l}^\frac{1}{k+1}, \mu_{y}^1\right).$$

    Note that $W_1\left(\mu_x^1,\mu_y^1\right)=d$. The inequality \eqref{JumpDRG} and Theorem \ref{WassersteinDRG} implies that
$$d\le \left( p+\frac{b_p-c_p }{k+1} \right)+ l\left(q-\frac{2c_q + M}{k+1}\right)+\left( r+\frac{b_r-c_r }{k+1} \right).$$

That is $$l\le \left\lfloor \frac{b_p-c_p+b_r-c_r}{2c_q+M} \right\rfloor,$$
completing the proof.
\end{proof}

\section{Further applications}
Our method applies not only to distance-regular graphs, but also to more general settings.  In this section, we take amply regular graphs and $(s,c,a,k)$-graphs for example.

\begin{definition}[Amply regular graph \cite{BCN89}] \label{ARGdefinition} 
    Let $G$ be a $k$-regular graph with $v$ vertices. Then $G$ is called an amply regular graph with parameters $(v,k,\lambda,\mu)$ if any two adjacent vertices have $\lambda$ common neighbors, and any two vertices at distance $2$ have $\mu$ common neighbors. 
\end{definition}

\begin{proof}[Proof of Theorem \ref{ARG}]
    For any two vertices $x,y$ with ${\rm d}(x,y)=2$,    
    Theorem \ref{WassersteinJump} shows that
        $$W\left(\mu_x^{1}, \mu_y^{\frac{1}{k+1}}\right)\le 2+\frac{|B_2(x,y)|-\mu}{k+1}\le 2+\frac{k-2\mu}{k+1}.$$
    For any two adjacent vertices $x$ and $y$, the same proof as Theorem \ref{WassersteinDRG} with $q=1$ shows that $$W\left(\mu_x^{\frac{1}{k+1}}, \mu_y^{\frac{1}{k+1}}\right)\le 1-\frac{2 + \left\lceil \frac{\lambda(\mu-\lambda)}{\mu-1} \right\rceil}{k+1}.$$
    The desired result then follows by Theorem \ref{Wasserstein}.
\end{proof}

\begin{definition}[$(s,c,a,k)$-graph \cite{T83}] \label{scakdefinition}
    Let $s,c,a$ and $k$ be integers with $s,c,a+2,k\ge 2$.
    An $(s,c,a,k)$-graph is a graph of maximum valence $k$ and girth $2s-1$ or $2s$ such that
    \begin{itemize}
        \item[\rm (1)] $|C_s(x,y)|=c$ for any two vertices $x,y$ with ${\rm d}(x,y)=s$,
        \item[\rm (2)] $|A_{s-1}(x,y)|=a$ for any two vertices $x,y$ with ${\rm d}(x,y)=s-1$.
    \end{itemize}
\end{definition}

\begin{lemma}[{\cite[Lemma 3.2]{T83}}]\label{scak83}
    An $(s, c, a, k)$-graph is either regular or bipartite, with all vertices in each partition having the same valency. In addition, $k_u=k_v$ for any two vertices $u,v$ with ${\rm d}(u,v)=s-1$.
\end{lemma}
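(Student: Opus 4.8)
The plan is to first prove the ``in addition'' clause --- that $k_u = k_v$ whenever ${\rm d}(u,v) = s-1$ --- by a double-counting identity, and then bootstrap it to the global dichotomy; throughout I assume $G$ connected (otherwise one argues component by component). Two consequences of the girth hypothesis drive the count. Since $G$ has girth at least $2s-1$, any two vertices at distance $t \le s-1$ are joined by a \emph{unique} geodesic, since two distinct geodesics would span a cycle of length at most $2t \le 2s-2 <$ girth; in particular $|C_t(x,y)| = 1$ whenever ${\rm d}(x,y) = t \le s-1$. Similarly, if ${\rm d}(x,y) = i \le s-2$ and $w \sim y$ with ${\rm d}(x,w) = i$, then $x \rightsquigarrow y \to w \rightsquigarrow x$ is a closed walk of odd length $2i+1 \le 2s-3 <$ girth, which is impossible; hence $|A_i(x,y)| = 0$ whenever ${\rm d}(x,y) = i \le s-2$.

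Now fix $u,v$ with ${\rm d}(u,v) = s-1$ and take the geodesic $u = u_0 \sim u_1 \sim \cdots \sim u_{s-1} = v$. Classifying the neighbours of $u$ by their distance to $v$: exactly one, namely $u_1$, lies at distance $s-2$; exactly $a = |A_{s-1}(v,u)|$ lie at distance $s-1$; and the remaining $k_u - 1 - a$ lie at distance $s$. I would count
\[
N := \#\bigl\{(w,w') \in S_1(u)\times S_1(v) : {\rm d}(w,w') = s-1\bigr\}
\]
by summing $|S_{s-1}(w)\cap S_1(v)|$ over $w \in S_1(u)$: the vertex $u_1$ contributes $|B_{s-2}(u_1,v)| = k_v-1$ (its $k_v$ neighbours of $v$ splitting as one at distance $s-3$, none at distance $s-2$, and $k_v-1$ at distance $s-1$, by the two girth facts); each neighbour of $u$ at distance $s-1$ from $v$ contributes $|A_{s-1}| = a$; and each at distance $s$ contributes $|C_s| = c$. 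Hence $N = (k_v-1) + a^2 + c(k_u-1-a)$. The same count with $u$ and $v$ interchanged gives $N = (k_u-1) + a^2 + c(k_v-1-a)$, and subtracting yields $(c-1)(k_u-k_v) = 0$; since $c \ge 2$ by definition, $k_u = k_v$. (When $s=2$ the geodesic is the edge $uv$ and the $u_1$-term is $k_v$ rather than $k_v-1$, but the same cancellation gives $k_u = k_v$.)

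For the global statement: if $s=2$ the clause just proved already says $k_u = k_v$ for adjacent $u,v$, so $G$ is regular. Assume $s \ge 3$, so $G$ is triangle-free. It then suffices to show $k_x = k_y$ whenever ${\rm d}(x,y) = 2$: given this, $k$ is constant on each connected component of the graph $G'$ on $V$ whose edges join vertices at $G$-distance exactly $2$, and since $G$ is connected and triangle-free, $G'$ is connected when $G$ is non-bipartite --- forcing $G$ regular --- and has exactly the two bipartition classes as its components when $G$ is bipartite --- forcing constant valency on each class. To prove $k_x = k_y$ for ${\rm d}(x,y) = 2$, let $p$ be a common neighbour of $x,y$ and build a geodesic $p = q_0 \sim q_1 \sim \cdots \sim q_{s-2}$ with $q_1 \notin\{x,y\}$: such a $q_1$ exists because $k_p \ge 3$, and then $|A_i| = 0$, $|C_i| = 1$ for $i \le s-2$ supply at each later step a neighbour strictly farther from $p$ (automatically distinct from $x,y$). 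With $z := q_{s-2}$, the girth bound forces ${\rm d}(x,z) = {\rm d}(y,z) = s-1$, so $k_x = k_z = k_y$ by the previous paragraph.

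The counting identity is short, so the delicate point is the local-to-global passage, especially the appeal to $k_p \ge 3$: for $s \ge 3$ two vertices at distance $2$ have a \emph{unique} common neighbour, so if that neighbour has valency $2$ the construction stalls, and this degenerate regime --- graphs with vertices of valency $2$, built around the cycle $C_n$ --- must be handled separately. I would also double-check the girth-based distance computations (uniqueness of short geodesics, and ${\rm d}(x,z) = s-1$) with care, since these are where the precise hypothesis ``girth $2s-1$ or $2s$'' enters.
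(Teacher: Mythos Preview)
The paper does not prove this lemma: it is quoted from Terwilliger \cite{T83} (Lemma~3.2 there) and invoked as a black box in the proof of Corollary~\ref{scak}. There is thus no in-paper argument to compare yours against.

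Assessing your proposal on its own: the double-counting for the ``in addition'' clause is correct. The two girth consequences you isolate --- $|C_i|=1$ for $i\le s-1$ and $|A_i|=0$ for $i\le s-2$ --- are exactly what is needed, and the symmetric count of pairs $(w,w')\in S_1(u)\times S_1(v)$ with ${\rm d}(w,w')=s-1$ yields $(c-1)(k_u-k_v)=0$ cleanly.

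The local-to-global passage has, as you say, a real gap; in fact there are two low-degree issues, not one. Extending the geodesic $p=q_0\sim\cdots\sim q_{s-2}$ requires $k_{q_i}\ge 2$ at every intermediate step, not just $k_p\ge 3$ at the first. The minimum-degree part can be closed: if some $v$ were a leaf with unique neighbour $u$, then any $w$ at distance $s$ from $v$ would give $c=|C_s(v,w)|=|C_{s-1}(u,w)|=1$, impossible; hence the eccentricity of $u$ is at most $s-2$, and then $|C_i|=1$, $|A_i|=0$ for $i\le s-2$ force the BFS layers from $u$ to form a tree, contradicting the girth hypothesis. The $k_p=2$ case genuinely needs a separate argument (one route: observe it forces $a\le 1$ and split into sub-cases), exactly the ``degenerate regime'' you anticipate. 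So your outline is right and the missing piece is localised where you put it.
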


\begin{proof}[Proof of Theorem \ref{scak}]
    For any two vertices $x,y$ with ${\rm d}(x,y)=s$ and $k_y=\delta$, Theorem \ref{WassersteinJump} shows that
        $$W\left(\mu_x^{1}, \mu_y^{\frac{1}{\delta+1}}\right)\le s+\frac{|B_s(x,y)|-c}{\delta+1}\le s+\frac{\delta-2c}{\delta+1}.$$
    For any two vertices $x,y$ with ${\rm d}(x,y)=s-1$ and $k_x=k_y=\delta$, the same proof as Theorem \ref{WassersteinDRG} with $q=s-1$ shows that $$W\left(\mu_x^{\frac{1}{\delta+1}}, \mu_y^{\frac{1}{\delta+1}}\right)\le s-1-\frac{2 + M}{\delta+1},\ {\rm where}\ M=\left\lceil \frac{a(c-a)}{c-1} \right\rceil.$$
    If $G$ is regular, the result follows by Theorem \ref{Wasserstein}. Otherwise, by Lemma \ref{scak83}, we suppose that $G$ is bipartite with bipartition $\{A,B\}$ such that each vertex in $A$ has valency $\delta$ and each vertex in $B$ has valency $k$. In addition, $k_u=k_v$ for any two vertices $u,v$ with ${\rm d}(u,v)=s-1$ implies that $s$ is odd.

    If \eqref{eq:scakdiameter} does not hold, there exist
    two vertices $x$ and $y$ with ${\rm d}(x,y)=D$ and $x\in B$ such that $D=2s+l(s-1)$, where $l$ is an integer satisfying
    \begin{align}\label{l2}
        l\ge 0\ {\rm and}\ l\ge \left\lfloor\frac{2(\delta-2c)}{2+M} \right\rfloor +1.
    \end{align}
    Let $L$ be a path of length $D$ connecting $x$ and $y$. On the path $L$, there is a sequence of vertices $x_0,x_1,...,x_{l}$ such that ${\rm d}(x,x_0)={\rm d}(x_{l} ,y)=s$ and ${\rm d}(x_{i-1},x_i)=s-1$ for $1\le i\le l$. Then, $x_i\in A$ for $0\le i\le l$.

    Note that $W_1(\mu_x^1,\mu_y^1)=D$. It follows by the triangle inequality that
    \begin{align}\notag
    D=W_1\left(\mu_x^1,\mu_y^1\right) &\le W_1\left(\mu_x^1,\mu_{x_0}^{\frac{1}{\delta+1}}\right) + \sum_{i=1}^l W_1\left(\mu_{x_{i-1}}^\frac{1}{\delta+1},\mu_{x_{i}}^\frac{1}{\delta+1}\right) + W_1\left(\mu_{x_l}^\frac{1}{\delta+1}, \mu_{y}^1\right)
    \\ \notag &\le 2\left(s+\frac{\delta-2c}{\delta+1}\right)+l\left(s-1-\frac{2 + M}{\delta+1}\right).
    \end{align}
    That is $l(2+M)\le 2(\delta-2c)$, which is contradictory to \eqref{l2}.
\end{proof}

\section{Acknowledgement}
 This work is supported by the National Key R \& D Program of China 2023YFA1010200 and the National Natural Science Foundation of China No. 12031017 and No. 12431004.



\begin{thebibliography}{99}

\bibitem{BDKM} S. Bang, A. Dubickas, J. H. Koolen and V. Moulton, There are only finitely many distance-regular graphs of fixed valency greater than two, Adv. Math. 269 (2015), 1–55.
\bibitem{BHK} S. Bang, A. Hiraki and J. H. Koolen, Improving diameter bounds for distance-regular graphs, European J. Combin. 27 (2006), 79–89,
\bibitem{BCN89} A. E. Brouwer, A. M. Cohen and A. Neumaier, Distance-regular graphs, Springer-Verlag, 1989.
\bibitem{CHLZ24} K. Chen, C. Hu, S. Liu and H. Zhang, Ricci curvature, diameter and eigenvalues of amply regular graphs, arXiv: 2410.21055, 2024.
\bibitem{CK19} D. Cushing, S. Kamtue, Long-scale Ollivier Ricci curvature of graphs, Anal. Geom. Metr. Spaces 7 (2019), no. 1, 22-44.
\bibitem{DKT} E. van Dam, J. H. Koolen and H. Tanaka, Distance-regular graphs, Dynamic Surveys, Electron. J.Combin., 2016, \\
http://www.combinatorics.org/ojs/index.php/eljc/article/view/DS22/pdf.
\bibitem{HLX24} X. Huang, S. Liu and Q. Xia, Bounding the diameter and eigenvalues of amply regular graphs via Lin--Lu--Yau curvature, Combinatorica (2024), 44 (2024), no. 6, 1177-1192.
\bibitem{I83} A. A. Ivanov, Bounding the diameter of a distance-regular graph, Dokl. Akad. Nauk SSSR 271(1983), 789–792.
\bibitem{K16} D. König, Über Graphen und ihre anwendung auf determinanten theorie und mengenlehre, Math. Ann. 77(1916), 453-465.
\bibitem{LLY11} Y. Lin, L. Lu and S.-T. Yau,  Ricci curvature of graphs, Tohoku Math. J. 63 (2011), no. 4, 605-627.
\bibitem{Mulder79} M. Mulder, $(0,\lambda)$-graphs and $n$-cubes, Discrete Math. 28 (1979), 179-188.
\bibitem{NP22JCTB} A. Neumaier and S. Penji\'c, A unified view of inequalities for distance-regular graphs, part I, J. Comb. Theory Ser. B 154 (2022), 392-439.
\bibitem{NP22} A. Neumaier and S. Penji\'c, On bounding the diameter of a distance-regular graph, Combinatorica 42 (2022), no. 2, 237-251.
\bibitem{O09} Y. Ollivier, Ricci curvature of Markov chains on metric spaces, J. Funct. Anal. 256 (2009), no. 3, 810-864.
\bibitem{Smith74} D. H. Smith, Bounding the diameter of a distance-transitive graph, J. Comb. Theory Ser. B 16 (1974), 139-144.
\bibitem{T82} P. Terwilliger, The diameter of bipartite distance-regular graphs, J. Comb. Theory Ser. B 32 (1982), 182-188.
\bibitem{T83} P. Terwilliger, Distance-regular graphs and $(s,c,a,k)$-graphs, J. Comb. Theory Ser. B 34 (1983), 151-164.




\end{thebibliography}
\end{document}